\newcommand{\ignore}[1]{}
\newcommand{\tr}{\operatorname{tr}}
\newcommand{\SL}{\operatorname{SL}}
\newcommand{\bb}{\mathbb} 
\newcommand{\C}{\bb C} 
\newcommand{\Z}{\bb Z}
\newcommand{\R}{\bb R}
\newcommand{\N}{\bb N}
\newcommand{\Q}{\mathbb Q}
\newcommand{\diag}{\operatorname{diag}}
\newcommand\norm[1]{\left\|#1\right\|}
\newcommand\abs[1]{\left|#1\right|}
\newcommand\set[1]{\left\{{#1}\right\}}
\def\rsemi{\hbox{$\triangleright\!\!\!<$}}
\newtheorem{Theorem}{Theorem}
\newtheorem{Cor}[Theorem]{Corollary}
\newtheorem{Prop}[Theorem]{Proposition}
\newtheorem{Lemma}[Theorem]{Lemma}
\newtheorem*{lemma*}{Lemma}
\newtheorem{remark}[Theorem]{Remark}
\newtheorem*{theorem*}{Theorem}
\newtheorem{Def}[Theorem]{Definition}
\numberwithin{equation}{section}
\numberwithin{Theorem}{section}
\begin{document}
\title[Exponents of Diophantine approximation]{ Diophantine approximation exponents \\ on homogeneous varieties}
\author{Anish Ghosh, Alexander Gorodnik, and Amos Nevo} 
\address{School of Mathematics, Tata Institute of Fundamental Research, Mumbai, India and School of Mathematics, University of East Anglia, Norwich, UK}
\email{ghosh.anish@gmail.com}
\address{School of Mathematics, University of Bristol, Bristol UK }
\email{a.gorodnik@bristol.ac.uk}
\address{Department of Mathematics, Technion IIT, Israel}
\email{anevo@tx.technion.ac.il}

\date{\today}
\subjclass[2000]{37A17, 11K60}
\keywords{Diophantine approximation, semisimple algebraic group, homogeneous space, lattice subgroup, automorphic spectrum.}
\thanks{The first author acknowledges support of  the Royal Society. The second author acknowledges
  support of EPSRC, ERC and RCUK. The third author acknowledges support of ISF}

\begin{abstract}
Recent years have seen very important developments at the interface of Diophantine approximation and
homogeneous dynamics. In the first part of the paper we give a brief exposition of a dictionary developed by Dani and
Kleinbock-Margulis which relates Diophantine properties of vectors to distribution of orbits of flows
on the space of unimodular lattices.  In the second part of the paper we  briefly describe an extension of this dictionary recently developed by the authors, which establishes  an analogous dynamical correspondence for general lattice orbits  on homogeneous spaces. 
We concentrate specifically on the problem of estimating exponents of Diophantine approximation by arithmetic lattices acting on algebraic varieties.
In the third part of the paper, we exemplify our results by  establishing explicit bounds for the Diophantine exponent  of dense lattice orbits in a number of basic cases.
 These include the linear and affine actions on affine spaces, and the action on the variety of matrices of fixed determinant. 
In some cases, these exponents are shown to be best possible. 
\end{abstract}

\maketitle
\begin{center}
\dedicatory{\emph{To S.G.~Dani on the occasion of his 65th birthday.}}
\end{center}
{\small
\tableofcontents
 }

\section{Introduction}

The theory of Diophantine approximation has many deep and fruitful connections with dynamical properties
of flows on homogeneous spaces. This connection has provided many fundamental new insights enriching both fields.
For instance, E.~Artin \cite{artin}
used continued fractions to construct dense geodesics on the modular surface, and
it was observed by C.~Series \cite{series}
that  classical continued fraction expansions can be constructed as cutting sequences for orbits of the geodesic
flow on the modular surface. 

This remarkable connection between Diophantine approximation and dynamics also exists in higher dimensions. 
It was realised by S.G.~Dani \cite{Dani} that Diophantine properties of vectors in Euclidean space
can be encoded by orbits of a suitable one-parameter flow on the space of unimodular lattices.
In particular, he showed that badly approximable vectors correspond to bounded orbits, and singular
vectors correspond to divergent orbits. This work has inspired many subsequent investigations exploring properties
of flows on homogeneous spaces, and the techniques developed gave rise to the solution of several longstanding open
problems in number theory. 
For example, one can mention such notable advances as the solution of Sprindzhuk's conjecture
in the theory of Diophantine approximation with dependent quantities by D.~Kleinbock and G.~Margulis \cite{KM1}
and the computation of the Hausdorff dimension of the set of singular vectors in $\mathbb{R}^2$ by
Y.~Cheung \cite{cheung}.

In this paper,  we first discuss the correspondence between Diophantine properties of vectors and recurrence
properties of flows developed by Dani, Kleinbock and Margulis. We then put these results in the context of recent works
of the authors on Diophantine exponents on homogeneous varieties of semisimple algebraic groups. Finally, we present some new estimates
for Diophantine exponents on specific homogeneous varieties, complementing the results of \cite{GGN3}.

\section{Diophantine approximation and the  shrinking target property}\label{sec:overview}

\subsection{Classical Diophantine approximation}
It is a well-known theorem of Dirichlet 
 that given a vector $x\in \mathbb{R}^d$, for every $R>1$ one can find
$m\in\Z^d$ and $n\in\N$ such that
\begin{equation}\label{eq:dir}
\left\|x-\frac{m}{n}\right\|_\infty\le n^{-1}R^{-1/d}\quad\hbox{and}\quad n\le R.
\end{equation}
\noindent Here $\|~\|_{\infty}$ denotes the maximum norm. Dirichlet introduced his famous pigeonhole principle to prove the above theorem. A proof can also be provided using Minkowski's convex body theorem. We refer the reader to \cite{Schmidt3} for details. 
In particular, it follows from (\ref{eq:dir}) that the inequality 
\begin{equation}\label{eq:dioph}
\left\|x-\frac{m}{n}\right\|_\infty\le n^{-1-1/d}
\end{equation}
always has a solution with $m\in\Z^d$ and $n\in\N$. 
We note that the theorem above is valid more generally for systems of linear forms. The properties introduced below and many theorems in this section are also valid in this setting. The Diophantine setting considered above is referred to as \emph{simultaneous} Diophantine approximation. 

Diophantine properties of vectors can also be studied in the context of linear forms, i.e. given $x \in \mathbb{R}^d$ one can study small values of the linear form 
$$ |m_1x_1 + \dots + m_d x_d + n| $$
for $m = (m_1, \dots, m_d) \in \mathbb{Z}^d$ and $n \in \mathbb{Z}$. This is referred to as the
\emph{linear} setting. These two settings are related by Khinchin's transference principle \cite{Schmidt3}.\\

One is interested in vectors in $\R^d$ for which the general estimates (\ref{eq:dir}) and
(\ref{eq:dioph})
can - or cannot - be improved; in
particular, one wishes to study the size of sets of such vectors. A vector $x$ is called {\it badly
  approximable} if there exists $c>0$ such that the inequality 
\begin{equation}\label{bad}
\left\|x-\frac{m}{n}\right\|_\infty\le c\, n^{-1-1/d}
\end{equation}
has no solutions $m\in\Z^d$ and $n\in\N$.

At the other extreme, the vector $x$ is called {\it singular}
if for every $c>0$ and $R\ge R(c)$, the system of inequalities
\begin{equation}\label{singular}
\left\|x-\frac{m}{n}\right\|_\infty\le c\, n^{-1}R^{-1/d}\quad\hbox{and}\quad n\le R
\end{equation}
has a solution $m\in\Z^d$ and $n\in\N$. In other words, Dirichlet's theorem can be infinitely improved
for such vectors.
One can show that a vector is badly approximable (resp. singular) if and only if it also has the analogous property in the sense of the  linear setting.

We note that a number $x\in \R$ is badly approximable if and only if its continued fraction expansion 
has bounded digits, and a number $x\in \R$ is singular if and only if it is rational. These properties are more difficult to characterise in higher dimensions, but it turns out they have a very
convenient interpretation based on dynamics of certain flows on the space of unimodular lattices. 
Denote by $\mathcal{L}_{d+1}$ the space of unimodular lattices in $\R^{d+1}$, which can be identified with 
the homogeneous space $\SL_{d+1}(\bb Z) \backslash \SL_{d+1}(\bb R)$. It has an invariant probability measure as well as a metric, inherited from a left invariant metric on $\SL_{d+1}(\bb R)$. The quotient is non-compact and its compact subsets are described by Mahler's compactness criterion. For $x\in\R^d$ we define the lattice
$$
\Lambda_x:=\{(n, m-nx):\, m\in\Z^d,n\in\Z\}\in \mathcal{L}_{d+1}.
$$
We consider the action on $\mathcal{L}_{d+1}$ by the one-parameter subgroups
$$g_t := \diag(e^{-t}, e^{t/d},\dots, e^{t/d}).
$$
The following results follow from 
the work of Dani:

\begin{Prop}[Dani \cite{Dani}]\label{badly}
With notation as above,
\begin{enumerate}
\item[(i)]
$x \in \bb R^d$ is badly approximable if and only if the semiorbit $\Lambda_x g_t$, $t>0$,  is bounded in
$\mathcal{L}_{d+1}$.
\item[(ii)]
$x \in \bb R^d$ is singular if and only if the semiorbit $\Lambda_x g_t$, $t>0$, is divergent, i.e. leaves every compact set in $\mathcal{L}_{d+1}$.
\end{enumerate}
\end{Prop}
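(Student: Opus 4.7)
My plan is to translate both statements into statements about the shortest vector of $\Lambda_x g_t$ via Mahler's compactness criterion, and then read off the Diophantine inequalities from an explicit computation of the norm.

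First I would write out the lattice action: for $(n, m-nx) \in \Lambda_x$ with $n \in \Z$, $m \in \Z^d$, its image under $g_t$ is $(e^{-t}n,\ e^{t/d}(m-nx))$, with sup-norm
$$\mu_t(n,m) := \max\!\bigl(e^{-t}|n|,\ e^{t/d}\|m-nx\|_\infty\bigr).$$
The degenerate case $n=0$ forces $m\neq 0$, whence $\mu_t(0,m)\ge e^{t/d}\ge 1$ for $t\ge 0$, so it plays no role. By Mahler's criterion, boundedness of the semiorbit is equivalent to the existence of $\epsilon>0$ such that $\mu_t(n,m)\ge \epsilon$ for all $t>0$ and all nonzero $(n,m)\in\Z\times\Z^d$, while divergence is equivalent to the property that for every $\epsilon>0$ there exists $T>0$ such that for every $t\ge T$ some nonzero $(n,m)$ satisfies $\mu_t(n,m)<\epsilon$.

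For part (i), the key observation is that $t\mapsto \mu_t(n,m)$ is a maximum of a decreasing and an increasing exponential, so it is minimized at the unique $t^*$ where the two entries coincide. A direct computation, equating $e^{-t^*}n=e^{t^*/d}\|nx-m\|_\infty$, yields
$$\min_{t>0}\mu_t(n,m) = \bigl(n\,\|nx-m\|_\infty^{\,d}\bigr)^{1/(d+1)}$$
(for $n\ge 1$, by symmetry we may restrict to positive $n$). Thus the semiorbit is bounded if and only if $n\|nx-m\|_\infty^d$ is bounded away from $0$ over all $n\ge 1$ and $m\in\Z^d$, which (after taking $(d+1)$-th roots) is precisely the condition $\|nx-m\|_\infty\ge c\,n^{-1/d}$ defining bad approximability in \eqref{bad}. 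So (i) follows directly, with the two constants related by $c^{d/(d+1)}$.

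For part (ii), set $R=e^t$, so that the pair of inequalities $e^{-t}|n|<\epsilon$ and $e^{t/d}\|nx-m\|_\infty<\epsilon$ rewrites as $|n|<\epsilon R$ and $\|nx-m\|_\infty<\epsilon R^{-1/d}$, exactly the singular system \eqref{singular} with $c=\epsilon$ and $R\ge R(\epsilon)=e^T$. The equivalence is then essentially tautological once the $n=0$ case is dispatched, and after noting that replacing $(n,m)$ with $(-n,-m)$ lets us assume $n\in\N$ without loss. The only subtle point to verify, and the one that deserves the most care, is that $n=0$ cannot produce the short vectors needed in the divergent direction once $T$ is chosen with $e^{T/d}\ge \epsilon^{-1}$; after that, the dictionary between lattice orbits and Dirichlet-type inequalities is immediate. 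Overall I expect no substantial obstacle: everything reduces to the one-line optimization computation above and a careful bookkeeping of $\epsilon$-parameters.
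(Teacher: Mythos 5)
Your argument is correct and takes essentially the same route the paper sketches (and Dani's original one): Mahler's compactness criterion together with the dictionary between short vectors of $\Lambda_x g_t$ and the inequalities (\ref{eq:class}), with the optimization over $t$ yielding the clean criterion $n\|nx-m\|_\infty^d$ bounded away from zero for part (i). The only place needing the care you already flag in (ii) is the direction ``singular $\Rightarrow$ divergent'': since (\ref{singular}) constrains $n\le R$ rather than $n<\epsilon R$, one must apply it with $R$ rescaled to $\epsilon e^t$ and $c$ of order $\epsilon^{1+1/d}$, which is exactly the $\epsilon$-bookkeeping you mention.
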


The idea of the proof of Proposition \ref{badly} is based on the observation that if the lattice
$\Lambda_x g_t$ contains a small non-zero vector, this gives a solution of the relevant
Diophantine inequalities. Indeed, let 
\begin{equation}\label{eq:omega}
\Omega(\delta):=\{\Lambda\in\mathcal{L}_{d+1}:\, \exists~z\in \Lambda-\{0\}:\, \|z\|_\infty< \delta\}.
\end{equation}
Then if for some $t>0$, we have $\Lambda_x g_t \in \Omega(\delta)$ with $\delta\in (0,1)$, the system of inequalities 
\begin{equation}\label{eq:class}
\left\|x-\frac{m}{n}\right\|_\infty < \delta e^{-t/d}\quad \hbox{and}\quad n<\delta e^t
\end{equation}
has a solution $m\in\Z^d$ and $n\in\N$.
By Mahler's compactness criterion, the family of sets $\Omega(\delta)$
form a basis of neighborhoods $\Omega(\delta)$ of infinity of $\mathcal{L}_{d+1}$. 
Thus, Diophantine properties of the vector $x$
are determined by visits of the semiorbit $\Lambda_x g_t$ to neighborhoods of infinity. 

\subsection{Schmidt's game and bounded orbits}
 The points whose $g_t$ semiorbits are bounded (resp. divergent) form a set of measure zero. Nevertheless, the former are quite abundant. In \cite{Dani2, Dani} Dani used the above correspondence (i.e. Proposition \ref{badly} (i)), along with Schmidt's results on his game to show that bounded orbits for certain partially hyperbolic flows on homogeneous spaces have full Hausdorff dimension. Schmidt's game was introduced in \cite{Schmidt1} and is played in a complete metric space $X$.  Two players, A and B start with a subset $W\subseteq X$, and two parameters $0<\alpha, \beta<1$.  The game consists of choosing a sequence of nested closed balls. Player A begins by choosing a  ball $A_0$ and B continues by choosing $B_0$ and so on. 
 $$A_0 \supset B_0 \supset A_1 \supset B_1\supset \dots $$
 \noindent The radii of the balls are related by the parameters $\alpha$ and $\beta$:
 $$ r(B_i) = \alpha r(A_i)~\text{ and }~r(A_{i+1}) = \beta r(B_i).$$ 
 \noindent Player B wins this game if $\bigcap_n A_n$ intersects $W$. The set $W$ is called $(\alpha, \beta)$-winning if Player B can find a winning strategy , $\alpha$-winning if it is $(\alpha, \beta)$-winning for all $0 < \beta < 1$ and winning if it is $\alpha$-winning for some $\alpha > 0$.
 
Schmidt games have many nice properties. Most prominently, a winning subset of $\R^d$ is \emph{thick}, i.e. the intersection of a winning set with every open set in $\R^d$ has Hausdorff dimension $d$. Schmidt showed that badly approximable vectors form an $\alpha$-winning set for $0 < \alpha \leq 1/2$. Subsequent to Dani's introduction of Schmidt's game in homogeneous dynamics, a general conjecture on abundance of bounded orbits was formulated by Margulis \cite{Mar2} in his Kyoto ICM address, generalizing Dani's results. The conjecture was proved in stages by Kleinbock and Margulis \cite{KMbounded} and Kleinbock and Weiss \cite{KWeiss}. There has been intense activity in this subject, and Schmidt games and their variations have been used to prove a wide variety of results. Recently in \cite{DS}, Dani and H. Shah introduced a new topological variant of Schmidt's game. 

\subsection{Dani's correspondence and Khintchine's theorem}

Proposition \ref{badly} and Proposition \ref{p:klma} below are examples of what Kleinbock and Margulis have termed the \emph{Dani correspondence}. In
their paper \cite{KM2}, they further developed this correspondence 
to handle inequalities 
of the form 
\begin{equation}\label{def:KG}
\left\|x-\frac{m}{n}\right\|_\infty\le n^{-1}\psi(n),
\end{equation}
\noindent where $\psi$ is a general
nonincreasing function, and to obtain a new proof of 
Khintchine's theorem\footnote{In fact, their paper deals with systems of linear forms, i.e. the
  Khintchine-Groshev theorem.} using homogeneous dynamics. Recall that Khintchine's theorem states that
the set of $x \in \bb R^d$ for which inequality (\ref{def:KG}) 
has infinitely many solutions $m\in \Z^d$ and $n\in\N$
has zero (resp. full) measure depending on the
convergence (resp. divergence) of the sum
$$ \sum_{n = 1}^{\infty} \psi(n)^d.$$
As before, the original question in Diophantine approximation is transfered to 
a problem about visits of the semiorbit $\Lambda_x g_t$ to a family of shrinking neighbourhoods
of infinity. The rate at which these neighbourhoods shrink is determined by the decay rate of the function
$\psi$. Kleinbock and Margulis then use the exponential mixing property of the flow $g_t$ in conjunction
with a very general and quantitative form of the Borel--Cantelli lemma due to Sprindzhuk, to
establish a zero-one law and deduce the
Khintchine-Groshev theorem as a corollary. As another corollary they also obtain logarithm laws for geodesic
excursions to shrinking neighbourhoods of cusps of locally symmetric spaces, thereby generalising
Sullivan's logarithm law. Further, they established versions of zero-one laws for multi-parameter actions,
thereby confirming, in stronger form, a conjecture of Skriganov in the geometry of numbers.

\subsection{Diophantine approximation on manifolds}
One says that the vector $x\in \R^d$ is {\it very well
  approximable} if there exists $\epsilon>0$ such that the inequality
$$
\left\|x-\frac{m}{n}\right\|_\infty\le m^{-1-1/d-\epsilon}
$$
has infinitely many solutions $m\in\Z^d$ and $n\in\N$. The exponent in (\ref{eq:dioph}) is the best possible, and it follows from the Borel-Cantelli
lemma that the set of $x\in \mathbb{R}^d$ which are very well approximable has zero Lebesgue measure. The subject of metric Diophantine approximation on manifolds seeks to study the extent to which generic Diophantine properties are inherited by proper submanifolds of $\bb R^d$. In $1932$, Mahler's investigations into the classification of numbers according to their Diophantine properties, led him to conjecture that almost every point on the curve
$$ (x, x^2,\dots, x^d) $$
\noindent is not very well approximable. Mahler's conjecture was resolved by Sprindzhuk who in turn conjectured a more general form of his theorem. Let $M$ be a $k$-dimensional submanifold of $\R^d$ parametrised by a $C^l$-map $f:U\to M$.
We say that $M$ is nondegenerate if for almost every $x\in U$, the space spanned by the partial derivatives
of $f$ up to order $l$ coincides with $\R^d$. The following was a long standing conjecture of Sprindzhuk
(in the case of analytic manifolds) and proved by Kleinbock and Margulis\footnote{In fact, they proved more general \emph{multiplicative} versions of the conjecture.}:

\begin{Theorem}[Kleinbock--Margulis \cite{KM1}]\label{th:km0}
Almost every point on a nondegenerate submanifold $M\subset \R^d$ is not very well approximable.
\end{Theorem}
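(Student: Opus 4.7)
The plan is to reduce the statement to a quantitative non-divergence estimate for the lattice trajectories $\Lambda_{f(x)} g_t$ on $\mathcal{L}_{d+1}$, where $f:U\to M$ parametrizes the manifold, and then run a Borel--Cantelli argument. First, using the Dani correspondence as encoded in inequality~(\ref{eq:class}), I would observe that $y\in\R^d$ is very well approximable if and only if there exists $\epsilon>0$ and a sequence $t_k\to\infty$ (which one may take in $\N$, at the cost of adjusting $\epsilon$) such that
\[
\Lambda_y\, g_{t_k}\in\Omega\bigl(e^{-\epsilon t_k}\bigr),
\]
where $\Omega(\delta)$ is the neighborhood of infinity defined in~(\ref{eq:omega}). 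Consequently, to prove the theorem it suffices to show that for every $\epsilon>0$ and every sufficiently small ball $B\subset U$,
\[
\sum_{t=1}^{\infty} \mu\!\left\{x\in B:\, \Lambda_{f(x)}\,g_t\in \Omega(e^{-\epsilon t})\right\}<\infty,
\]
where $\mu$ is Lebesgue measure on $U$. Borel--Cantelli then yields the conclusion for almost every $x\in B$, and exhausting $U$ by such balls finishes the proof.

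The core of the argument is the quantitative non-divergence estimate: for a ball $B\subset U$ (of sufficiently small radius, independent of $\delta$) one has
\[
\mu\!\left\{x\in B:\, \Lambda_{f(x)}\,g_t\in \Omega(\delta)\right\}\ \ll_{B,f}\ \delta^{\alpha}\,\mu(B),
\]
for some $\alpha=\alpha(d)>0$. Applying this with $\delta=e^{-\epsilon t}$ gives a geometrically decaying series, and the Borel--Cantelli step goes through. So the theorem is essentially equivalent to this measure estimate together with the Dani dictionary.

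To establish the non-divergence estimate one introduces the class of \emph{$(C,\alpha)$-good functions} on a ball: functions $h$ satisfying $\mu\{x\in B':|h(x)|<\eta\sup_{B'}|h|\}\le C\eta^{\alpha}\mu(B')$ for all sub-balls $B'\subset B$. Polynomials of degree $\le l$ are $(C,1/l)$-good with a universal $C$, and the nondegeneracy of $M$ implies, after passing to Taylor approximations, that the coordinate functions of $f$ (and more generally the Plücker coordinates of the flags they generate) can be controlled by $(C,\alpha)$-good functions on small enough balls, with $C,\alpha$ depending only on $k,d,l$. The heart of the matter, and what I anticipate as the main obstacle, is the combinatorial/geometric argument that converts the $(C,\alpha)$-good property of these Plücker coordinates into a measure bound on the set of $x\in B$ for which $\Lambda_{f(x)}g_t$ has some nonzero vector shorter than $\delta$. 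This is done by Kleinbock--Margulis via a clever induction on primitive sublattices of $\Lambda_{f(x)}g_t$, controlling covolumes of all such sublattices simultaneously, and organizing the ``bad'' set with a Besicovitch-type covering argument.

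Once the non-divergence estimate is in place, the remainder is soft: nondegeneracy is local, so it suffices to fix $x_0\in U$ at which the nondegeneracy condition holds, restrict to a ball around $x_0$ small enough that the relevant Plücker coordinates are $(C,\alpha)$-good on it, apply the estimate with $\delta=e^{-\epsilon t}$, sum over $t\in\N$, invoke Borel--Cantelli, and finally use that almost every point of $U$ admits such a neighborhood. I expect the manipulation of $(C,\alpha)$-good functions and the inductive analysis over sublattices to be the truly technical portion, while the translation to dynamics and the Borel--Cantelli step are routine given the results already described in the paper.
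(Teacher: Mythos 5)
Your proposal follows essentially the same route as the paper's own sketch: Proposition \ref{p:klma} converts the very well approximable property into the shrinking-target condition $\Lambda_{f(x)}g_t\in\Omega(e^{-\epsilon t})$, and the theorem is then reduced, via a Borel--Cantelli summation over integer times, to the quantitative non-divergence estimate for $(C,\alpha)$-good functions, which is exactly the measure bound the paper states and attributes to \cite{KM1}. The one point to keep in view is that your uniform-in-$t$ bound $\mu\{x\in B:\,\Lambda_{f(x)}g_t\in\Omega(\delta)\}\ll\delta^{\alpha}\mu(B)$ requires verifying not only the $(C,\alpha)$-good property of the relevant (Pl\"ucker-coordinate) functions, i.e.\ hypothesis (1) of the quoted non-divergence theorem, but also the lower bound $\sup_{x\in B}\|h(x)\Delta\|\ge\rho$ with $\rho$ independent of $t$ (hypothesis (2)), which is the second essential use of nondegeneracy in \cite{KM1}; like the paper, you defer this technical core to the original source, so the outline is consistent with, and at the same level of detail as, the exposition given here.
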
 

The proof once more follows a dynamical route. Namely,  it was observed by Kleinbock and Margulis that the property of being very well approximable also has a convenient interpretation in terms of dynamics on the space of unimodular 
lattices:

\begin{Prop}[Kleinbock--Margulis \cite{KM1}]\label{p:klma}
A vector $x\in \R^d$ is very well approximable if and only if there exists $\alpha>0$ such that
$\Lambda_x g_{t_i} \in \Omega(e^{-\alpha t_i})$ for a sequence $t_i\to\infty$.
\end{Prop}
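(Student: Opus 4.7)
The plan is to unpack $\Lambda_x g_t \in \Omega(\delta)$ explicitly, parametrise $\delta = e^{-\alpha t}$, and observe that it matches a Diophantine inequality of the type (\ref{def:KG}). First I would note that every nonzero vector of $\Lambda_x g_t$ has the form
$$v_{n,m}(t) = (e^{-t}n,\ e^{t/d}(m-nx)), \qquad (n,m)\in (\Z\times\Z^d)\setminus\{0\},$$
and that for $\delta<1$ the subcase $n=0$ is ruled out since $\|v_{0,m}(t)\|_\infty \ge e^{t/d} > 1 > \delta$. So $\Lambda_x g_t\in\Omega(\delta)$ is equivalent to the existence of $n\in\N$ and $m\in\Z^d$ satisfying the system (\ref{eq:class}); this is precisely the observation behind Proposition~\ref{badly}.

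For the forward direction, I would take $\alpha>0$ and a sequence $t_i\to\infty$ with $\Lambda_x g_{t_i}\in\Omega(e^{-\alpha t_i})$, and let $(n_i,m_i)$ be the integer pairs from (\ref{eq:class}) with $\delta = e^{-\alpha t_i}$. The bound $n_i < e^{(1-\alpha)t_i}$ lets me eliminate $t_i$ from the second bound to obtain
$$\left\|x-\frac{m_i}{n_i}\right\|_\infty < \frac{e^{-(1/d+\alpha)t_i}}{n_i} \le n_i^{-1-1/d-\epsilon}, \qquad \epsilon := \frac{\alpha(1+1/d)}{1-\alpha} > 0.$$
Either infinitely many $(n_i,m_i)$ are distinct, directly giving that $x$ is very well approximable with exponent $\epsilon$, or some pair $(n,m)$ repeats infinitely often, forcing $\|m-nx\|_\infty = 0$ and hence $x = m/n \in \Q^d$; in that case VWA is trivial via the scaled family $(kn,km)$, $k\in\N$.

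For the converse, given infinitely many $(n_j,m_j)$ with $\|x-m_j/n_j\|_\infty \le n_j^{-1-1/d-\epsilon}$ and $n_j\to\infty$, I pick $\alpha \in (0,\ \epsilon/(1+1/d+\epsilon))$ and set $t_j := (1-\alpha)^{-1}\log n_j + 1$. A direct computation shows $n_j e^{-t_j} < e^{-\alpha t_j}$, and the choice of $\alpha$ ensures $\|m_j - n_j x\|_\infty \cdot e^{t_j/d} < e^{-\alpha t_j}$ for all $n_j$ sufficiently large. Hence (\ref{eq:class}) holds with $\delta = e^{-\alpha t_j}$, so $\Lambda_x g_{t_j}\in\Omega(e^{-\alpha t_j})$ with $t_j\to\infty$. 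The main point to verify is the bookkeeping between $\alpha$ and $\epsilon$---each determines the other via an explicit monotone formula---which is where the exponent $1+1/d$ of (\ref{eq:dioph}) enters the dynamical correspondence; apart from the rational edge case noted above, the argument is entirely formal and parallels the proof of Proposition~\ref{badly}.
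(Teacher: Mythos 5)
Your argument is correct and follows essentially the same route the paper indicates: it is the standard Dani--Kleinbock--Margulis correspondence computation, unpacking membership of $\Lambda_x g_t$ in $\Omega(e^{-\alpha t})$ via the small lattice vector $(e^{-t}n,\,e^{t/d}(m-nx))$ and converting between the exponents $\alpha$ and $\epsilon$, which is exactly the observation the paper records around (\ref{eq:class}) (the proposition itself being cited from \cite{KM1}). One cosmetic remark: membership in $\Omega(\delta)$ is equivalent to $\left\|nx-m\right\|_\infty<\delta e^{-t/d}$ together with $1\le n<\delta e^{t}$, not literally to (\ref{eq:class}) (which drops a factor of $n$), but the inequalities you actually verify in both directions are the correct ones --- together with the routine reductions to $\alpha<1$ and to a subsequence with $n_j\to\infty$ --- so the proof stands.
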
 

Hence, in order to improve the exponent of Diophantine approximation for a vector $x\in\mathbb{R}^d$, 
one needs to establish
that the semiorbit $\Lambda_x g_t$ visits the sequence of exponentially shrinking sets
$\Omega(e^{-\alpha t})$ infinitely often. This is a common feature of many chaotic dynamical systems,
and is usually called the \emph{shrinking target property}. With the help of Proposition \ref{p:klma}, the proof of Theorem \ref{th:km0} reduces to an analysis
of visits of translated submanifolds $\Lambda_{f(U)}g_t$ to the neighbourhoods $\Omega(e^{-\alpha t})$.
The crucial and the most difficult part of the argument is
an explicit estimate on the measure of the set of $x\in U$ such that $\Lambda_{f(x)}g_t\in
\Omega(e^{-\alpha t})$. This estimate generalises the non-divergence properties of 
unipotent flows discovered by Margulis in \cite{Mar} and developed further in quantitative forms by Dani in \cite{Dani2,Dani3}. 

 The relevant result is stated as follows. Let $C, \alpha$ be positive numbers, $B$ an open subset of $\bb R^k$ and $\lambda$ denote Lebesgue measure of appropriate dimension.  Say that a function $f : B \to \bb R$ is $(C, \alpha)$-good on $B$ if for any open ball $J \subset B$ and any $\epsilon > 0$,
$$ \lambda(\{x \in J~:~|f(x)| < \epsilon\}) \leq \left(\frac{\epsilon}{\sup_{x \in J}|f(x)|} \right)^{\alpha} \lambda(B).$$

\noindent The main property of unipotent flows which allows for nondivergence is precisely the $(C,
\alpha)$-good property. Kleinbock and Margulis showed that more generally, smooth nondegenerate maps also
have this property. Let $\mathcal{L}(\bb Z^d)$ be the poset of primitive subgroups of $\bb Z^d$. 
For discrete subgroups $\Lambda$ we define $\|\Lambda\|$ as the norm of the corresponding vector 
in a suitable wedge product. Now we state the main
estimate in \cite{KM1}, which plays crucial role in the proof of Theorem \ref{th:km0}:

\begin{Theorem}
Let an open ball $B(x_0,r_0) \subset \bb R^k, C, \alpha > 0, 0 < \rho < 1/d$ and a continuous map $h :
B(x_0,3^dr_0) \to \SL_d(\bb R)$ be given. We assume that for every $\Delta \in \mathcal{L}(\bb Z^d)$,
\begin{enumerate}
\item the function $x \to \|h(x)\Delta\|$ is $(C, \alpha)$-good on $B(x_0,3^dr_0)$,
\item $\sup_{x \in B(x_0,r_0)}\|h(x)\Delta\| \geq \rho$.
\end{enumerate}
Then for every $\epsilon\in (0,\rho]$,
$$ \lambda(\{x \in J~:~\bb Z^d h(x) \notin\Omega(\epsilon)\}) \leq D(d,k) C \left(\frac{\epsilon}{\rho}\right)^{\alpha}\lambda(B), $$
where $D(d,k)$ is a constant depending on $d$ and $k$ only.
\end{Theorem}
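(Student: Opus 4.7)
The plan is to adapt the Kleinbock--Margulis non-divergence strategy, which relies on an induction on the rank of primitive subgroups of $\bb Z^d$ combined with a geometric covering argument. The starting observation is that the event $\bb Z^d h(x)\in\Omega(\epsilon)$ is controlled by the functions $\phi_\Delta(x):=\|h(x)\Delta\|$ for $\Delta\in\mathcal{L}(\bb Z^d)$: the lattice has a short nonzero vector precisely when $\phi_\Delta(x)<\epsilon$ for some rank-one primitive subgroup $\Delta$.

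First I would record the single-subgroup bound. For a fixed $\Delta\in\mathcal{L}(\bb Z^d)$, hypothesis (i) gives that $\phi_\Delta$ is $(C,\alpha)$-good on $B(x_0,3^d r_0)$, while (ii) gives $\sup_{B(x_0,r_0)}\phi_\Delta\ge \rho$. Applied to the inner ball $B(x_0,r_0)$, the defining inequality of $(C,\alpha)$-good functions yields directly
\[
\lambda\bigl(\{x\in B(x_0,r_0):\phi_\Delta(x)<\epsilon\}\bigr)\le C(\epsilon/\rho)^\alpha \lambda(B(x_0,r_0)),
\]
which is the required conclusion for a single $\Delta$; the task is to pass from ``some'' $\Delta$ to a uniform statement over the infinitely many candidates.

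Next I would set up the inductive scheme, with induction on the rank $j=1,\dots,d$ of primitive subgroups. For each $x$ in the bad set I would select a maximal flag $\Delta_{j_1}\subsetneq\dots\subsetneq\Delta_{j_s}$ of primitive subgroups of $\bb Z^d$ such that $\phi_{\Delta_{j_i}}(x)<1$; by Minkowski--Mahler type reasoning the collection of primitive subgroups whose $\phi$-value at a given lattice is $\le 1$ is nested, so the flag is essentially canonical. Using the slightly enlarged ball $B(x_0,3^d r_0)$ (the role of the dilation factor $3^d$ is to give room for a Besicovitch/Vitali-type covering of the inner ball $B(x_0,r_0)$), I would cover the bad set by subballs on which one specific $\Delta$ in the flag controls the short-vector behaviour. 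On each such subball the single-subgroup bound of the previous paragraph applies with $\rho$ replaced by a comparable quantity, and the inductive hypothesis handles the complement.

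The main obstacle is combinatorial: a priori infinitely many primitive subgroups $\Delta$ could contribute to the bad set, and one must argue that on each subball in the covering only one primitive subgroup at each rank is relevant, and that the multiplicity with which any $\Delta$ is charged across the covering is uniformly bounded. This is precisely where hypothesis (ii) together with the $(C,\alpha)$-good property on the dilate $B(x_0,3^d r_0)$ is essential: the hypothesis forces $\phi_\Delta$ to be large somewhere on the inner ball, which combined with the $(C,\alpha)$-good inequality on the dilate shows that $\phi_\Delta$ is $\ge \rho$ on most of $B(x_0,r_0)$, so that only a controlled number of $\Delta$'s can be responsible for the flag on any given subball. Summing the contributions from the covering and over the $d$ possible ranks in the induction yields the claimed bound with a constant $D(d,k)$ depending only on $d$ and $k$.
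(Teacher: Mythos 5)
First, a point of reference: the paper itself contains no proof of this theorem --- it is quoted (with some typos: $J$ should be $B=B(x_0,r_0)$ and the bad event is $\Z^d h(x)\in\Omega(\epsilon)$, as you correctly assumed) from Kleinbock--Margulis \cite{KM1}, with the reader referred to \cite{Kleinbock-survey}; so your proposal must be judged against the original non-divergence argument you are explicitly trying to reproduce. Your opening steps are fine: for a fixed $\Delta$ the $(C,\alpha)$-good hypothesis applied to the ball $B(x_0,r_0)$, together with $\sup_{B(x_0,r_0)}\|h(\cdot)\Delta\|\ge\rho$, gives $\lambda(\{x\in B:\|h(x)\Delta\|<\epsilon\})\le C(\epsilon/\rho)^\alpha\lambda(B)$, and you correctly identify that the entire difficulty is to pass from a single $\Delta$ to all of $\mathcal{L}(\Z^d)$ simultaneously.

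The proposal, however, stops exactly where that difficulty begins, and the one structural claim you offer to get past it is false. It is not true that ``the collection of primitive subgroups whose $\phi$-value at a given lattice is $\le 1$ is nested'': a unimodular lattice can contain many pairwise incomparable primitive subgroups of covolume less than $1$ (contract two coordinate directions of $\Z^d$ and expand a third), so the flag you wish to mark at each point is not canonical. What actually drives the Kleinbock--Margulis induction is the submodularity inequality $\|\Delta_1\cap\Delta_2\|\cdot\|\Delta_1+\Delta_2\|\le\|\Delta_1\|\cdot\|\Delta_2\|$ on the poset $\mathcal{L}(\Z^d)$, which permits trading two incomparable ``low'' subgroups for their intersection and span and thereby sets up an induction on the length of flags; reconciling the resulting non-uniqueness of marked subgroups with a bounded-multiplicity covering of $B(x_0,r_0)$ by balls inside the dilate $B(x_0,3^dr_0)$ (this is where the constant $D(d,k)$, and its dependence on $k$ through a Besicovitch-type covering lemma in $\R^k$, actually arise) is the real content of the theorem. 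Your sketch merely asserts that hypothesis (ii) forces ``only a controlled number of $\Delta$'s'' to be relevant on each subball --- and the intermediate claim that (ii) plus $(C,\alpha)$-goodness makes $\phi_\Delta\ge\rho$ on most of $B(x_0,r_0)$ does not follow, since applying the good-function inequality at level $\rho$ only yields the useless bound $C\lambda(B)$ --- and no value of $D(d,k)$ is ever produced. As it stands this is a reasonable plan of attack in which the central combinatorial-covering step, i.e.\ the theorem itself, is left unproved.
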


We refer the reader to the survey \cite{Kleinbock-survey} for further developments and applications of the above theorem.

\ignore{

Problems in the subject of Diophantine approximation on manifolds also come in simultaneous and linear
settings. However, efficient transference principles are
 difficult to obtain for general approximation functions $\psi$.
In the linear setting, the Khinchin-type theorem for nondegenerate manifolds have been fully established 
in due to \cite{b1,b2,b3}. In the simultaneous setting only the divergence case
has been established in full generality in \cite{Beresnevich}, but 
the convergence case is known only in some special cases, for instance, for planar curves \cite{vv}.

For the particular choice of function $\psi_v(x) := x^{-v}$, we will refer to simultaneously $\psi$-approximable vectors as simultaneously $v$-approximable. In the important paper \cite{Beresnevich}, Khintchine and Jarnik type theorems are established for analytic nondegenerate manifolds in arbitrary dimension.  In an earlier work \cite{DD}, Dickinson and Dodson compute the Hausdorff dimension of simultaneously $v$-approximable vectors on the circle. They show that it is $\frac{1}{v + 1}$. See also the previous work \cite{Melnichuk} for bounds in this direction using exponential sums. A key idea in their proof, encapsulated in the following Lemma is that rational points which are close to the unit circle, are constrained to lie on the circle. Thus, the problem of simultaneous Diophantine approximation on the circle essentially reduces to the study of intrinsic Diophantine approximation. 

\begin{Lemma}\label{lem:circle}
Let $(x,y) \in S^1$ satisfy the inequalities 
$$ |qx - p|, |qy-r| = o(1/q)$$

\noindent for $p, q, r \in \bb Z, q > 0$. Then for $q$ sufficiently large, $(p/q, r/q) \in S^1$.
 
\end{Lemma}    

\noindent A similar idea (rational points sufficiently near certain surfaces actually lie on the surface) plays a role in Drutu's paper on simultaneous Diophantine approximation on quadrics \cite{Drutu} where the Hausdorff dimension of simultaneously $\psi$-approximable points on quadrics is computed. We note that estimates for Hausdorff dimension in the setting of intrinsic Diophantine approximation on homogeneous varieties were obtained in \cite{GGN2} using ergodic theorems, duality and the mass transference principle of Beresnevich and Velani. The general problem of obtaining precise values for the Hausdorff dimension remains open.
}

 We note that in the theory of metric Diophantine approximation on manifolds, one is concerned with
 approximating points on manifolds by \emph{all rational points in the ambient Euclidean space}. We now turn in the next section to discuss a completely different, but equally natural, question.  Namely we will consider Diophantine 
approximation of a general point on a variety intrinsically, by \emph{rational points lying on the variety itself}. 

\subsection{Intrinsic Diophantine approximation on algebraic varieties}
The question of Diophantine approximation on algebraic varieties by rational points on the variety itself was raised already half a century ago by S. Lang \cite{Lang},
but the results in this direction are still very scarce. 
In some cases one can deduce that 
the set $X(\Q)$ of rational points on an algebraic variety $X$ is dense in the set $X(\R)$ of real
points using a rational parametrisation of $X$ (for example, the stereographic projection for the quadratic
surfaces), but this approach usually provides poor bounds on Diophantine exponents that depend on the
degree of the parametrisation map. More generally, one can consider the problem of Diophantine approximation
by the set $X(\Z[1/p])$ of $\Z[1/p]$-points in $X$. Here even establishing density is a nontrivial task.

One of the most natural examples of algebraic varieties 
with rich structure of rational points is given by algebraic groups and their homogeneous spaces.
Here several results regarding quantitative density of rational points have been proved.
This includes elliptic curves and abelian varieties \cite{W}, general 
homogeneous spaces of semisimple algebraic groups \cite{GGN1,GGN2}, and
quadratic surfaces \cite{Drutu,schmutz,K-,FKMS}. In the latter two cases,
one can also use dynamical correspondences which 
relates Diophantine properties of points to shrinking target properties of orbits for the corresponding
dynamical systems. Let us now turn to a brief description of these correspondences.

\vspace{0.2cm}

Let $X$ be an algebraic variety in $\C^d$ defined over $\Q$ equipped with an action of 
a connected almost simple algebraic group $G\subset \hbox{GL}_d(\C)$ defined over $\Q$.
For simplicity of exposition, let us consider here the problem of Diophantine approximation in $X(\R)$ by rational points in
$X(\Z[1/p])$ where $p$ is prime.
A basic observation is that since rational points on $X$ can be parametrized using orbits of the group $G(\Z[1/p])$, they can be studied using
techniques from the theory of dynamical systems. The relevant dynamical system here is the space 
$$
Y=G(\Z[1/p]) \backslash (G(\R)\times G(\Q_p))
$$
with the action of the group $G(\Q_p)$ by right multiplication. 
The following proposition is an analogue the classical Dani correspondence described in the previous section:

\begin{Prop}[Ghosh--Gorodnik--Nevo \cite{GGN1}]\label{p:ggn}
Given $x\in X(\R)$, there exists $y_x\in Y$ and a sequence of neighborhoods $\mathcal{O}_\epsilon$
of the identity coset in $Y$ such that if 
$$
 y_x \cdot b \in \mathcal{O}_\epsilon\quad \hbox{for some $b\in G(\Q_p)$ with $\|b\|_p\le R$,}
$$
then the system of inequalities 
$$
\left\|x-\frac{m}{n}\right\|\le \epsilon\quad\hbox{and}\quad n\le c(x)\, R
$$
has a solution $\frac{m}{n}\in X(\Z[1/p])$, with the constant $c(x)$ uniform over $x$ in compact sets.
\end{Prop}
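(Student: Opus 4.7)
The strategy parallels the construction in Proposition~\ref{badly}: encode an approximation event into a recurrence event for an orbit in the homogeneous space $Y$. First I pick a base rational point $x_0 \in X(\Q)$ in the $G(\R)$-orbit of $x$ (since the interesting case is when this orbit meets $X(\Q)$), and choose $g_x \in G(\R)$ with $g_x x_0 = x$. Set $y_x$ to be the class of $(g_x, e_p)$ in $Y$, where $e_p$ is the identity of $G(\Q_p)$. For the shrinking targets I take
$$\mathcal{O}_\epsilon \;=\; G(\Z[\tfrac{1}{p}])\cdot (U_\epsilon \times V),$$
where $U_\epsilon \subset G(\R)$ is the $\epsilon$-neighborhood of the identity and $V \subset G(\Q_p)$ is a fixed compact open neighborhood of the identity (e.g.\ $G(\Z_p)$). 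This mirrors the role of $\Omega(\delta)$ in the classical Dani correspondence, but adjusted so the $p$-adic factor controls denominators while the real factor controls the approximation error.

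Next, the hypothesis $y_x \cdot b \in \mathcal{O}_\epsilon$ unfolds to the existence of $\gamma \in G(\Z[\tfrac{1}{p}])$ with $\gamma g_x \in U_\epsilon$ and $\gamma b \in V$. The candidate approximant is $\gamma^{-1} x_0$, which lies in $X(\Z[\tfrac{1}{p}])$ because $G(\Z[\tfrac{1}{p}])$ preserves $X(\Z[\tfrac{1}{p}])$. For the real estimate, write $\gamma g_x = u \in U_\epsilon$, so $\gamma^{-1} x_0 = g_x u^{-1} x_0$ and hence
$$\|x - \gamma^{-1} x_0\| = \|g_x x_0 - g_x u^{-1} x_0\| \le L(g_x) \cdot \|x_0 - u^{-1} x_0\| \le c_1(x)\,\epsilon,$$
where $L(g_x)$ is a local Lipschitz constant of the action. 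Choosing $g_x$ to depend continuously on $x$ on compact subsets of the orbit makes $c_1(x)$ bounded uniformly on compacta.

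For the denominator estimate, from $\gamma b \in V$ one gets $\gamma^{-1} = b v^{-1}$ for some $v \in V$, so $\|\gamma^{-1}\|_p \le \|b\|_p \cdot \max_{v \in V} \|v^{-1}\|_p \le C_V\, R$. It remains to translate the $p$-adic matrix norm of $\gamma^{-1}$ into an actual bound on the denominator $n$ of $\gamma^{-1} x_0 = m/n$. Since $G \subset \GL_d$ acts polynomially and $x_0 \in X(\Q)$ has a fixed denominator $N_0$, the entries of $\gamma^{-1} x_0$ are $\Q_p$-linear combinations of entries of $\gamma^{-1}$ with coefficients involving $x_0$, yielding $n \le N_0\, C_V\, R = c_2(x)\, R$. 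Combining the two estimates and replacing $\epsilon$ by $\epsilon/c_1(x)$ at the outset gives the conclusion, with $c(x) = \max(c_1(x), c_2(x))$ uniform on compact sets.

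The main technical obstacle lies in the last step: passing from the operator-norm bound $\|\gamma^{-1}\|_p \le C_V R$ to a clean denominator bound on $\gamma^{-1}x_0$. This hinges on the polynomial structure of the $G$-action on $X$ and on an explicit compatibility between the ambient matrix norm on $G(\Q_p) \subset \GL_d(\Q_p)$ and the $p$-adic denominators of coordinate functions on $X$. A secondary subtlety is the uniformity of $c(x)$, which requires choosing the section $x \mapsto g_x$ so that $\|g_x\|_\infty$ and its inverse are bounded on compact sets of $x$; this is possible because the map $G(\R) \to X(\R)$, $g \mapsto g\cdot x_0$, admits continuous local sections on the orbit.
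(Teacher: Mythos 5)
Your construction is essentially the same argument that underlies the cited result (the present survey states Proposition~\ref{p:ggn} without proof, referring to \cite{GGN1}): embed $\Gamma=G(\Z[1/p])$ diagonally in $G(\R)\times G(\Q_p)$, take $y_x=\Gamma(g_x,e)$ with $g_x x_0=x$ and $\mathcal{O}_\epsilon$ the image of $U_\epsilon\times G(\Z_p)$, and read off the approximant $\gamma^{-1}x_0$ from $\gamma g_x\in U_\epsilon$, $\gamma b\in G(\Z_p)$, exactly as in the classical Dani correspondence. One correction: the base point must be chosen in $X(\Z[1/p])$ (nonempty by the standing assumption, and meeting the open orbit $G(\R)x$ whenever $x\in\overline{X(\Z[1/p])}$), not merely in $X(\Q)$, since otherwise $\gamma^{-1}x_0$ need not lie in $X(\Z[1/p])$ and its denominator acquires primes other than $p$; with that choice, the step you single out as the main obstacle is immediate from the ultrametric estimate $n\le\max\bigl(1,\|\gamma^{-1}\|_p\,\max_j|(x_0)_j|_p\bigr)\ll_{x_0}\|b\|_p\le R$, since for points of $\Z[1/p]^d$ the denominator is exactly the maximal $p$-adic absolute value of the coordinates.
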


Proposition \ref{p:ggn} shows that the problem of Diophantine approximation reduces to 
the shrinking target problem for the orbit $ y_x G(\Q_p)$ with respect to  the sequence of neighbourhoods $\mathcal{O}_\epsilon$.

Let us assume that the set $X(\Z[1/p])$ is not empty and the group $G$ is isotropic over $\Q_p$. 
Then the closure $\overline{X(\Z[1/p])}$ in $X(\R)$ is open and closed in $X(\R)$ (see \cite{GGN1}).
In particular, when $X(\R)$ is connected, $X(\Z[1/p])$ is dense.
To measure the quality of Diophantine approximation, we introduce the notion of Diophantine approximation
exponents.

\begin{Def}\label{def:DE0}
{\rm  
Assume that  for $x\in \overline{X(\Z[1/p])}$, there exist constants $c=c(x)$ and
$\epsilon_0=\epsilon_0(x)$, such that for all $\epsilon < \epsilon_0$, the system of inequalities 
\begin{equation}\label{eq:d}
\left\|x-\frac{m}{n}\right\|\le \epsilon\quad\hbox{and}\quad n\le c\, \epsilon^{-\kappa}
\end{equation}
has a solution $\frac{m}{n}\in X(\Z[1/p])$.  
Define the {\it Diophantine
approximation  exponent} $\kappa_p(x)$ as the infimum of $\kappa > 0$ such that the foregoing inequalities have a
solution.
}
\end{Def} 

A lower bound on the exponents $\kappa_p(x)$ can be deduced from the following pigeon-hole argument.
Let us introduce the growth exponent of the number of rational points
$$
a_p(X)=\sup_{\hbox{\tiny compact }K\subset X(\R)} \limsup_{R\to\infty} \frac{\log N_R(K,X(\Z[1/p])}{\log R}
$$
where $N_R(K,X(\Z[1/p])$ denotes the number of points $\frac{m}{n}\in K\cap X(\Z[1/p])$ such that $n\le R$.
One can show that if the constant $c=c(x)$ is uniform over compact sets in $X(\R)$, then 
\begin{equation}\label{eq:pigeon}
\kappa_p(X)\ge \frac{\dim(X)}{a_p(X)}.
\end{equation}

The question about upper bounds for $\kappa_p(X)$ is much deeper. Indeed, any such upper bound would
quantify density of $X(\Z[1/p])$ in $\overline{X(\Z[1/p])}$. In view of Proposition \ref{p:ggn}, 
this question can be answered by studying the problem of establishing quantitative equidistribution 
for orbits of $G(\Q_p)$ in $Y$. Let us therefore introduce the family of averaging operators given by 
$$
A_R:L^2(Y)\to L^2(Y): \phi\mapsto \frac{1}{|B_R|}\int_{B_R} \phi(yb)\, db
$$
where $B_R=\{b\in G(\Q_p):\, \|b\|_p\le R\}$. 
It was shown in \cite{GGN1} that there exists $C,\theta>0$
such that for all sufficiently large $R$,
\begin{equation}\label{eq:mean}
\|A_R(\phi)-P(\phi)\|\le C\, |B_R|^{-\theta} \|\phi\|_2,
\end{equation}
where $P$ is an explicit projection operator on $L^2(Y)$.
Let $\theta_p$ denote the supremum over $\theta$'s for which the estimate (\ref{eq:mean}) holds.
This parameter provides the crucial input to deduce the following upper bound on the Diophantine exponent:

\begin{Theorem}[Ghosh, Gorodnik, Nevo \cite{GGN1}]\label{th:ggn1}
With notation as above,
\begin{enumerate}
\item[(i)] For almost every $x\in \overline{X(\Z[1/p])}$,
$$
\kappa_p(x)\le (2\theta_p)^{-1} \frac{\dim(X)}{a_p(G)}.
$$
\item[(ii)] 
For every $x\in \overline{X(\Z[1/p])}$,
$$
\kappa_p(x)\le \theta_p^{-1} \frac{\dim(X)}{a_p(G)}.
$$
\end{enumerate}
Moreover, the constant $c=c(x)$ in (\ref{eq:d}) is uniform over $x$ in compact sets. 
\end{Theorem}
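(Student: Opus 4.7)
\emph{Reduction and setup.} By Proposition \ref{p:ggn}, to establish $\kappa_p(x)\le \kappa$ it suffices to produce, for every sufficiently small $\epsilon>0$, some $b\in G(\Q_p)$ with $\|b\|_p\le R:=c\,\epsilon^{-\kappa}$ and $y_x\cdot b\in\mathcal{O}_\epsilon$; equivalently, to show that $A_R(\mathbf{1}_{\mathcal{O}_\epsilon})(y_x)>0$. Fix a smooth bump $\phi_\epsilon\ge 0$ supported in $\mathcal{O}_\epsilon$ with $\|\phi_\epsilon\|_\infty\le 1$ and $\int\phi_\epsilon\asymp \vol(\mathcal{O}_\epsilon)\asymp \epsilon^{\dim(X)}$, the latter scaling reflecting that the targets $\mathcal{O}_\epsilon$ shrink in the $\dim(X)$ transverse directions. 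Use also the standard growth $|B_R|\asymp R^{a_p(G)}$ and the fact that $P(\phi_\epsilon)(y_x)\asymp \vol(\mathcal{O}_\epsilon)$.

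\emph{Proof of (i).} Applying (\ref{eq:mean}) to $\phi_\epsilon$, using the bound $\|\phi_\epsilon\|_2^{\,2}\le\int\phi_\epsilon\asymp\vol(\mathcal{O}_\epsilon)$, and Chebyshev's inequality gives
\[
\vol\!\bigl\{y\in Y:\, A_R\phi_\epsilon(y)\le \tfrac12 P(\phi_\epsilon)(y)\bigr\}\ \ll\ \frac{|B_R|^{-2\theta_p}\,\vol(\mathcal{O}_\epsilon)}{\vol(\mathcal{O}_\epsilon)^2}\ \asymp\ |B_R|^{-2\theta_p}\,\epsilon^{-\dim(X)}.
\]
Along a dyadic sequence $\epsilon_n=2^{-n}$ with $R_n:=c\,\epsilon_n^{-\kappa}$, this measure is summable precisely when $\kappa>\dim(X)/(2\theta_p\,a_p(G))$. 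The Borel--Cantelli lemma then ensures $A_{R_n}\phi_{\epsilon_n}(y)>0$ for all large $n$ for a.e.\ $y\in Y$; since the parametrisation $x\mapsto y_x$ of Proposition \ref{p:ggn} pushes the natural measure on $X(\R)$ to one locally absolutely continuous with respect to that on $Y$, this translates into $\kappa_p(x)\le \dim(X)/(2\theta_p\,a_p(G))$ for almost every $x\in\overline{X(\Z[1/p])}$.

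\emph{Proof of (ii) and the main obstacle.} The pointwise bound for every $x$ cannot be obtained from Chebyshev. Instead, one upgrades (\ref{eq:mean}) to a genuinely pointwise estimate of the form
\[
\bigl|A_R\phi_\epsilon(y_x)-P(\phi_\epsilon)(y_x)\bigr|\ \ll\ |B_R|^{-\theta_p}\,\|\phi_\epsilon\|_{\mathcal{S}},
\]
where $\|\cdot\|_{\mathcal{S}}$ is a Sobolev-type norm taken \emph{only in the $G(\Q_p)$-direction} along which the averaging operator acts, so that $\|\phi_\epsilon\|_{\mathcal{S}}=O(1)$ uniformly in $\epsilon$. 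Demanding positivity at $y_x$ then requires $|B_R|^{-\theta_p}\ll\epsilon^{\dim(X)}$, which yields $\kappa_p(x)\le \dim(X)/(\theta_p\,a_p(G))$ for every $x$, with the factor $2$ loss compared to (i) coming exactly from trading $\|\phi_\epsilon\|_2\asymp\vol^{1/2}$ for $\|\phi_\epsilon\|_\infty\asymp 1$. The principal technical obstacle is precisely this upgrade: converting the $L^2$ mean ergodic estimate (\ref{eq:mean}) into a uniform pointwise bound at the specific point $y_x$ while avoiding any $\epsilon$-dependent Sobolev penalty in the transverse directions to the orbit. The uniformity of $c=c(x)$ over compacta in $X(\R)$ is then inherited directly from the corresponding uniformity in Proposition \ref{p:ggn}.
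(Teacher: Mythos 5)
Part (i) of your proposal is essentially the intended argument (this survey does not reproduce the proof, which is in \cite{GGN1}, but the route it sketches is exactly Proposition \ref{p:ggn} plus the mean ergodic estimate (\ref{eq:mean}), Chebyshev, and Borel--Cantelli along a dyadic sequence), and your bookkeeping of exponents there is correct, modulo points you implicitly assume: that $P(\phi_\epsilon)(y_x)\asymp\vol(\mathcal{O}_\epsilon)$ for the relevant $y_x$ (this is where isotropy of $G$ over $\Q_p$ and the open-closed orbit structure of $\overline{X(\Z[1/p])}$ enter) and that a.e.\ statements on $Y$ transfer to a.e.\ $x$.

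The genuine gap is in part (ii). You reduce the everywhere statement to a pointwise estimate $|A_R\phi_\epsilon(y_x)-P(\phi_\epsilon)(y_x)|\ll |B_R|^{-\theta_p}\|\phi_\epsilon\|_{\mathcal S}$ with a Sobolev norm ``only in the $G(\Q_p)$-direction'' that is $O(1)$ uniformly in $\epsilon$, and you yourself flag this upgrade as the principal obstacle --- but you give no mechanism for it, and it does not follow from (\ref{eq:mean}): an operator-norm bound on $L^2$ gives no control at a single point, and pointwise effective equidistribution statements of this type normally carry Sobolev norms in the transverse directions, which for a bump on an $\epsilon$-ball blow up in $\epsilon$ and destroy the exponent. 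So as written, (ii) is asserted rather than proved.

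The actual argument (in \cite{GGN1}) stays entirely within $L^2$ and exploits the product structure of $Y=G(\Z[1/p])\backslash(G(\R)\times G(\Q_p))$: right translation by small elements $u_\infty\in G(\R)$ commutes with the $p$-adic averaging operators $A_R$, and the targets $\mathcal{O}_\epsilon$ are invariant under a fixed compact open subgroup of $G(\Q_p)$ and shrink only in the real directions. One therefore thickens the single point $y_x$ into the small set $y_x U_\epsilon$ with $U_\epsilon$ an $\epsilon$-ball in $G(\R)$ times that compact open subgroup, and either (a) averages $A_R\phi_{2\epsilon}$ over $y_xU_\epsilon$ and applies Cauchy--Schwarz together with (\ref{eq:mean}), which gives an error $\ll |B_R|^{-\theta_p}\|\phi_{2\epsilon}\|_2\,\vol(U_\epsilon)^{-1/2}$, or equivalently (b) bounds the measure of the exceptional set $\{y:\,A_R\phi_\epsilon(y)=0\}$ by $|B_R|^{-2\theta_p}\vol(\mathcal{O}_\epsilon)^{-1}$ and demands that it be smaller than $\vol(y_xU_\epsilon)$, so that some $y'=y_xu_\infty u_p$ in the thickened set visits $\mathcal{O}_\epsilon$ under some $b\in B_R$; the commutation then gives $y_x(u_pb)\in\mathcal{O}_{2\epsilon}$ with $\|u_pb\|_p\ll R$. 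In either form, the factor $\vol^{-1/2}$ (resp.\ the comparison of two volumes of size $\epsilon^{\dim}$) is exactly what trades $\|\phi_\epsilon\|_2\asymp\vol(\mathcal{O}_\epsilon)^{1/2}$ for a constant and produces the factor-of-two loss $\theta_p^{-1}$ in place of $(2\theta_p)^{-1}$. Your final inequality in (ii) has the right shape, but the step you black-boxed is precisely this thickening/commutation argument, not a Sobolev-type pointwise ergodic theorem; without it the proof of (ii) is incomplete.
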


Typically, $a_p(G)\ge a_p(X)$, and in that case, it follows that $\theta_p=1/2$, the bound in Theorem \ref{th:ggn1}(i)
matches with the lower bound (\ref{eq:pigeon}), so that it is best possible.  
For instance, Theorem \ref{th:ggn1} gives a sharp bound for Diophantine
approximation by $\Z[1/p]$-points on the two-dimensional sphere.
We note that the exponent $\theta_p$ is closely related
to the integrability exponents of automorphic representations, which has been extensively studied
in relation to the generalised Ramanujan conjectures,
and explicit estimates on $\theta_p$ are available in a number of cases (see \cite{Sarnak,BloBru} for a
detailed account). 

\vspace{0.2cm}

A fruitful approach to the problem of Diophantine approximation by the set of all rational points on nonsingular quadratic surfaces 
has been developed in \cite{Drutu, K-,FKMS}. In particular, it turns out that the problem of
Diophantine approximation on the sphere is related to a shrinking target problem for a suitable 
one-parameter flow $g_t$ on the space $Y=\Gamma \backslash G$ where $G=\hbox{SO}(d,1)$ and $\Gamma$ is a subgroup 
of integral matrices in $G$. We note that $Y$ can be naturally embedded in the space of unimodular
lattices $\mathcal{L}_{d+1}$ and one can set $\Omega_Y(\delta)=Y\cap \Omega(\delta)$ where $\Omega(\delta)$ is
defined in (\ref{eq:omega}). Given a vector $x$ on the $d$ dimensional unit sphere $S^d$, one can associate a point $y_x\in Y$
such that the following dynamical correspondence holds:

\begin{Prop}[Kleinbock--Merrill \cite{K-}]\label{p:km}
Suppose that there exists $t>0$ such that $y_x g_t \in \Omega_Y(\delta)$. Then the system of inequalities
$$
\left\| x-\frac{m}{n}\right\|_\infty < \frac{2\delta^{1/2}e^{-t/2}}{n^{1/2}}\quad \hbox{and}\quad
n<\delta e^t
$$
has a solution with $\frac{m}{n}\in S^d$.
\end{Prop}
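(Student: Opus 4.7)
The plan is to realize the dynamical setting geometrically and then convert a short vector in the associated lattice into a rational point on the sphere. Fix a rational quadratic form $Q$ on $\bb R^{d+1}$ of signature $(d,1)$ whose projectivized null cone is identified with $S^d$; for instance take $Q(v_0,v_1,\dots,v_d)=v_0^2-v_1^2-\cdots-v_d^2$, so that a primitive integer null vector $w=(n,m)$ produces the rational point $m/n\in S^d$. Set $G=\SO(Q)$, $\Gamma=G\cap \SL_{d+1}(\bb Z)$, and embed $Y=\Gamma\backslash G$ into $\mathcal{L}_{d+1}$ by sending $\Gamma h\mapsto \bb Z^{d+1} h$. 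For $x\in S^d$, choose $k_x\in G(\bb R)$ rotating a fixed null direction $e_+$ to the null ray through $(1,x)$, and set $y_x=\Gamma k_x$. Take $\{g_t\}$ to be the split Cartan subgroup of $G$ acting with eigenvalues $e^t, 1,\dots,1, e^{-t}$ on the Cartan decomposition $\bb R e_+\oplus V_0\oplus \bb R e_-$ (this is the geodesic flow in the associated hyperbolic picture).

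The hypothesis $y_x g_t\in \Omega_Y(\delta)$ supplies a nonzero $w\in \bb Z^{d+1}$ with $\|k_x g_t w\|_\infty<\delta$. The heart of the argument, which I regard as the key structural step, is an integrality trick: since $k_x g_t\in G$ preserves $Q$, one has $|Q(w)|=|Q(k_x g_t w)|\le C\|k_x g_t w\|_\infty^2< C\delta^2$. But $Q(w)\in \bb Z$ for $w\in \bb Z^{d+1}$, so taking $\delta$ sufficiently small forces $Q(w)=0$; that is, $w$ is an integer null vector, and writing $w=(n,m)$ with $n\in\bb Z$ and $m\in\bb Z^d$ yields a candidate $m/n\in S^d(\bb Q)$, with $|m|=|n|$.

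The remaining step is the size bookkeeping, which I expect to be the main technical obstacle since the sharp constants depend on a careful choice of basis adapted to $k_x$. Decomposing $k_x g_t w$ along the eigenspaces of $g_t$, the $e^{-t}$-eigenvalue direction (mapped by $k_x$ onto the null ray through $(1,x)$) carries the component $(n+m\cdot x)/2\asymp n$ when $m/n$ is close to $x$, so the bound on that coordinate gives $n\lesssim \delta e^t$. The $e^{t}$-eigenvalue direction carries the component $(n-m\cdot x)/2\asymp n\|x-m/n\|^2$, so the bound on that coordinate gives $n\|x-m/n\|^2\lesssim \delta e^{-t}$, which rearranges to $\|x-m/n\|\le 2\delta^{1/2}e^{-t/2}n^{-1/2}$. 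The transverse component $w_\perp\in V_0$, fixed by $g_t$, is likewise bounded by $\delta$, which is automatically compatible since $|w_\perp|\asymp n\|x-m/n\|$. Combining the integrality argument that upgrades an approximate null vector to an exact one with this Lorentzian size decomposition yields both claimed inequalities.
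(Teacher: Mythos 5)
The paper itself contains no proof of Proposition \ref{p:km}: it is quoted from Kleinbock--Merrill, and your argument is essentially theirs, so in substance you are on the right track. The two key ideas are exactly the standard ones: the integrality trick ($Q$ is $\Gamma$-rational, $Q(w)\in\Z$, and $Q$-invariance of $k_xg_t$ forces the short integer vector onto the light cone, so $\|m\|_2=|n|$ and $m/n$ lies on the sphere), and the light-cone bookkeeping adapted to $x$, where your identities are correct and are precisely what produces the stated constants: for a null integer vector, $(n-m\cdot x)/2=\frac{n}{4}\|x-m/n\|_2^2$ and $(n+m\cdot x)/2=n(1-\frac{1}{4}\|x-m/n\|_2^2)$, so a bound $\delta e^{-t}$ on the expanding coordinate rearranges to $\|x-m/n\|<2\delta^{1/2}e^{-t/2}n^{-1/2}$, and the contracted coordinate (together with the previous one, since the two light-cone coordinates sum to $n$) gives the denominator bound.

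Two slips should be fixed. First, your conventions are internally inconsistent and, read literally, lose the dependence on $x$: in your setup $k_x$ sends $e_+$, the $e^{t}$-eigendirection, to the ray through $(1,x)$, while your computation requires the ray through $(1,x)$ to be the \emph{contracted} ($e^{-t}$) direction; moreover the short vector must be obtained by applying the $x$-dependent twist to the integer vector \emph{before} the diagonal flow, i.e.\ $w\,k_xg_t$ with row vectors and the lattice $\Z^{d+1}k_xg_t$ (the paper's convention), or equivalently $g_tk_x^{-1}w$ with columns. As you wrote it, $k_xg_tw$ with $k_x$ a rotation has norm comparable to $\|g_tw\|$, which carries no information about $x$. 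Second, the integrality step only forces $Q(w)=0$ once $\delta$ is below an explicit dimensional threshold, and converting $\|v\|_\infty<\delta$ into bounds on the $x$-adapted light-cone coordinates costs dimensional constants; so what your sketch actually yields is the correspondence for sufficiently small $\delta$ and with the stated constants only after this norm comparison is pinned down (which suffices for applications such as Theorem \ref{th:km}, where the constant is unspecified), and you should state these provisos rather than assert the exact constants directly from a $\lesssim$ bound.
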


For instance, Proposition \ref{p:km} is used to prove  the following analogue of the classical Dirichlet
theorem (cf. (\ref{eq:dir}) in the case of the unit spheres:

\begin{Theorem}[Kleinbock--Merrill \cite{K-}] \label{th:km}
There exists $c>0$ such that for every $x\in S^d$ and $R>1$, the system of inequalities
$$
\left\| x-\frac{m}{n}\right\|_\infty \le \frac{c}{n^{1/2}R^{1/2}}\quad \hbox{and}\quad n\le R
$$
has a solution with $\frac{m}{n}\in S^d$.
\end{Theorem}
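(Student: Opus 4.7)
The plan is to deduce the theorem from Proposition \ref{p:km} together with a uniform Minkowski-type estimate: there is a single constant $\delta_0 > 0$, depending only on the embedding $Y \hookrightarrow \mathcal{L}_{d+1}$, such that $y \in \Omega_Y(\delta_0)$ for every $y \in Y$. Because $G = \SO(d,1)$ sits inside $\SL_{d+1}(\R)$, each lattice corresponding to a point of $Y$ is unimodular in $\R^{d+1}$. Minkowski's convex body theorem applied to the cube $[-\delta_0,\delta_0]^{d+1}$ then produces a nonzero lattice vector with $\ell^\infty$-norm strictly less than $\delta_0$, as soon as $\delta_0$ is chosen slightly larger than $1$. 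Hence $y_x g_t \in \Omega_Y(\delta_0)$ for \emph{every} $x \in S^d$ and every $t > 0$.

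With $\delta_0$ fixed, given $x \in S^d$ and $R$ large enough so that $R \geq \delta_0$, set $t := \log(R/\delta_0)$, so that $\delta_0 e^t = R$ and $\delta_0^{1/2} e^{-t/2} = \delta_0/R^{1/2}$. Since $y_x g_t \in \Omega_Y(\delta_0)$, Proposition \ref{p:km} provides $m/n \in S^d$ with
\[
\left\| x - \frac{m}{n}\right\|_\infty < \frac{2\delta_0^{1/2} e^{-t/2}}{n^{1/2}} = \frac{2\delta_0}{n^{1/2}R^{1/2}}, \qquad n < \delta_0 e^t = R.
\]
This is the desired inequality with $c = 2\delta_0$. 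For the remaining range $1 < R < \delta_0$, one fixes an arbitrary rational point $m_0/n_0 \in S^d$ (e.g.\ a standard basis vector with $n_0 = 1$) and notes that $\|x - m_0/n_0\|_\infty$ is uniformly bounded on $S^d$, so enlarging $c$ if necessary absorbs these finitely many bounded scales.

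The only nontrivial ingredient is the uniform bound on the systole in step one, and in fact this is simply Minkowski's theorem for unimodular lattices in $\R^{d+1}$; the rest of the argument is an algebraic manipulation that converts the shrinking-target conclusion of Proposition \ref{p:km} into a Dirichlet-type statement by choosing the time parameter $t$ optimally in terms of $R$. The main conceptual point is that the sphere admits a Dirichlet theorem with exponent $1/2$ (rather than the classical $1/d$) because the dynamical correspondence for $S^d$ passes through $g_t \in \SO(d,1)$ rather than $\diag(e^{-t}, e^{t/d},\dots,e^{t/d})$, so the time parameter contributes quadratically rather than linearly to the approximation quality.
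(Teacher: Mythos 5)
Your reduction of the theorem to Proposition \ref{p:km} by optimizing the time parameter ($t=\log(R/\delta_0)$, so that $\delta e^t=R$ and $\delta^{1/2}e^{-t/2}=\delta/R^{1/2}$) is indeed the intended route, and the algebra together with the separate treatment of the bounded range $1<R<\delta_0$ is fine. The gap is in the first step: you invoke Proposition \ref{p:km} at a scale $\delta_0>1$, precisely so that Minkowski's theorem makes the hypothesis $y_xg_t\in\Omega_Y(\delta_0)$ vacuous. But the correspondence behind Proposition \ref{p:km} is not valid in that range. A point of $\Omega_Y(\delta)$ yields a rational point of $S^d$ only because the short lattice vector comes from an integer vector $w$ with $Q(w)=0$ (an integer point on the isotropic cone, which is what parametrizes $\frac{m}{n}\in S^d$); since the flow preserves $Q$ and $Q$ is integer-valued on integer vectors, this is forced only when $\delta$ is small enough that $|Q(v)|<1$ for $\|v\|_\infty<\delta$. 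Compare the classical correspondence in this paper, which is explicitly restricted to $\delta\in(0,1)$ before (\ref{eq:class}); the same restriction is implicit in Proposition \ref{p:km}. For $\delta_0>1$ the Minkowski vector can simply be (a rotate of) a standard basis vector, with $Q=\pm1$ and zero ``denominator'' coordinate, which produces no rational point on the sphere at all. Indeed, if Proposition \ref{p:km} were true for $\delta>1$, its hypothesis would hold for every $x$ and $t$ and its conclusion would literally be Theorem \ref{th:km}; so your argument, read at that scale, assumes the statement it is meant to prove.

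Consequently, the assertion in your closing paragraph that ``the only nontrivial ingredient is the uniform bound on the systole, and this is simply Minkowski's theorem'' misidentifies the content of the theorem. What Kleinbock and Merrill actually need, and prove, is a uniform statement about vectors \emph{on the rational cone}: for a fixed admissible (small) $\delta$, every lattice of the form $y_xg_t$ contains a nonzero integer vector on the cone of controlled size, equivalently every point of the finite-volume quotient $\Gamma\backslash G$ lies within bounded distance of the cusp neighborhoods. This rests on reduction theory for the arithmetic group $\SO(Q)(\Z)$ (finitely many cusps, compactness of the complement of the cusp regions), not on Minkowski's convex body theorem for arbitrary unimodular lattices; note also that for small $t$ the lattice $y_xg_t$ is close to a rotate of the standard lattice and has no vector of norm much less than $1$, so a single small $\delta_0$ working for all $t>0$ cannot be obtained as cheaply as you suggest, and the bounded range of $t$ (equivalently of $R$) must be handled exactly as you do, by hand. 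To repair the proof you would need to import that uniform cone-vector statement from \cite{K-} (or reprove it); the paper under review does not supply it, as it only cites the theorem.
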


We also mention that this approach also allows to prove an analogue of the Khinchin--Groshev theorem \cite{K-} 
and to study the set of badly
approximable vectors \cite{K-} and well approximable vectors \cite{Drutu}
 in the context of the intrinsic Diophantine approximation.

\section{Diophantine exponents for group actions}
The problem of establishing Diophantine exponents discussed in the previous sections is an instance of a much more general problem, namely   establishing a rate of distribution for dense lattice orbits on homogeneous spaces of the ambient group. In \cite{GGN3} we developed a general approach to establishing quantitative density 
of orbits which is based on a \emph{duality principle} combined with a \emph{quantitative mean ergodic theorem}.
 Then we establish a dynamical correspondence similar
to the correspondences discussed in the previous sections. 
More explicitly, the duality principle implies that quantitative density of lattice orbits
$\Gamma x$ in the homogeneous space $Gx$ is equivalent to the quantitative density of the corresponding orbits of $H_x$
in the space $Y=\Gamma \backslash G$, where $H_x$ denotes the stabilizer of $x$ in $G$.

To motivate the discussion by a concrete example, consider a case where the approach mentioned above works especially well, namely when $\Gamma$ is a lattice in an algebraic subgroup $G$
of the group of affine transformations.
In classical inhomogeneous Diophantine approximation, one is interested in minimizing
the quantity $|nx-m+x_0|$ for given $x,x_0\in \R^d$, as $n$ varies over $\Z$ and $m$ varies over $\Z^d$.
This problem can be interpreted as establishing quantitative density for orbits of the semigroup 
$\Gamma= \Z^\times \ltimes \Z^d$ acting on $\R^d$. 
There has been considerable recent interest in the study of density of orbits for more general groups of
affine transformations. One example is $\Gamma=\SL_{2}(\bb Z)$ acting on the punctured plane, where the 
$\Gamma$-orbit of a point with irrational slope is dense. In \cite{LN1, LN2}, Laurent and
Nogueira have studied effective versions of this density. 
In \cite{MW}, Maucourant and Weiss have used
effective equidistribution results for horocycle flows to obtain effective results for dense
$\Gamma$-orbits on the plane, where $\Gamma$ is an arbitrary lattice in $\SL_{2}(\bb R)$. 

In our discussion below $\Gamma$ will denote for instance a discrete subgroup of the group $\hbox{Aff}(\R^d)$ of affine transformations of $\R^d$.
We equip $\hbox{Aff}(\R^d)$ with a norm which is a natural way to measure complexity of elements of
$\Gamma$. Studying effective density of $\Gamma$-orbits amounts to estimating Diophantine exponents 
which we define as follows. 

\begin{Def}\label{def:DE} 
{\rm
Assume that  for $x,x_0\in \overline{\Gamma x}$, there exist  constants $c=c(x,x_0)$ and
$\epsilon_0=\epsilon_0(x,x_0)$ such that for all $\epsilon < \epsilon_0$, 
the system of inequalities 
$$
\|\gamma^{-1}x- x_0\|_\infty\le \epsilon\quad\text{and}\quad\|\gamma\|\le  c\,\epsilon^{-\kappa}.
$$ 
has a solution $\gamma\in \Gamma$.
Define the Diophantine
approximation  exponent $\kappa_\Gamma(x,x_0)$ as the infimum of $\kappa > 0$ such that the foregoing inequalities have a
solution.
}
\end{Def} 

 We note that the exponent defined above generalizes the Diophantine exponent for \emph{uniform} approximation by $\SL_{2}(\bb Z)$-orbits in $\bb R^2$ as considered by Laurent and Nogueira \cite{LN1}.
The exponent $\kappa_{\Gamma}$ above is naturally related to the exponent $\kappa_p$ discussed in the
previous section (which is associated with dense orbits of the group $\Gamma=G(\Z[\frac1p])$).

A basic geometric argument leads to a lower bound on the Diophantine exponent.
Let us define the growth exponent of $\Gamma$-orbits by
$$
a_\Gamma(x)=\sup_{\hbox{\tiny compact }K\subset X(\R)} \limsup_{R\to\infty} \frac{\log N_R(K,x)}{\log R}
$$
where $N_R(K,x)$ denotes the number of elements $\gamma\in \Gamma$ such that $\|\gamma\|\le R$ and
$\gamma^{-1}x$ belongs to $K$. It is not hard to show that for almost every $x_0\in X$,
$$
\kappa_\Gamma(x,x_0)\ge \frac{\dim(X)}{a_\Gamma(x)}.
$$
The quantity $a_\Gamma(x)$ could be difficult to estimate in general, but if the variety $X$ is homogeneous,
it can be estimated in terms of volume growth of a suitable subgroup.  $X$ can then be identified with the homogeneous space $G/H$
where $H$ is a closed subgroup of $G$. We set 
$$
a(X)=\limsup_{t\to\infty} \frac{\log m_H(H_t)}{t},
$$
where $H_t=\{h\in H;\, \log \|h\|\le t\}$ and $m_H$ is a right-invariant Haar
measure on $H$. Then one can show using discreteness of $\Gamma$ that for every $x\in X$,
$$
a_\Gamma(x)\le a(X).
$$
In particular, for almost every $x_0\in X$,
\begin{equation}\label{eq:llow}
\kappa_\Gamma(x,x_0)\ge \frac{\dim(X)}{a(X)}.
\end{equation}
Thus the fundamental question that arises is to determine when this lower bound is in fact sharp (for almost all $x$), and in general to give an estimate for the upper bound.

We show that in the above homogeneous setting
one can reduce the original problem of quantitative
density of $\Gamma$-orbits in $X$ to the problem of quantitative density of the corresponding $H$
orbits in the space $Y:=\Gamma \backslash G$. More precisely, we have the following dynamical correspondence:

\begin{Prop}[Ghosh--Gorodnik--Nevo \cite{GGN3}]
Let $x=gH\in X$, $x_0=g_0H\in X$  and $y=\Gamma g \in Y$, $y_0=\Gamma g_0 \in Y$. 
There exists a sequence of neighbourhoods $\mathcal{O}_\epsilon(y_0)$ of $y_0$
such that if there exists $h\in H$ such that 
$$
y\cdot h\in \mathcal{O}_\epsilon(y_0)\quad \hbox{and}\quad \|h\|\le R,
$$
then there exists $\gamma\in \Gamma$ such that
$$
\|\gamma^{-1}x-x_0\|_\infty\le \epsilon\quad \hbox{and}\quad \|\gamma\|\le c(x,x_0) R,
$$
where $c(x,x_0)$ is uniform over $x,x_0$ in compact sets. 
\end{Prop}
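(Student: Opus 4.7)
The plan is to construct the neighborhoods $\mathcal{O}_\epsilon(y_0)$ as images in $Y$ of small $\epsilon$-dependent neighborhoods of $g_0$ in $G$, translate the hypothesis about $y\cdot h$ into an equation in $G$, and then exploit right $H$-invariance of the quotient map $G \to X = G/H$. No ergodic or measure-theoretic input enters this statement; it is a purely algebraic lift.

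First I would fix a neighborhood $V \subset G$ of the identity small enough that the projection $\pi : G \to Y$ is injective on $g_0 V$, which is possible since $\Gamma$ is discrete. For each $\epsilon > 0$, let $V_\epsilon \subset V$ be a ball around the identity of radius $\delta = \delta(\epsilon, x_0)$, chosen small enough that $g_0 V_\epsilon H$ is contained in the $\|\cdot\|_\infty$-ball of radius $\epsilon$ around $x_0$ in $X$; the map $v \mapsto g_0 v H$ is continuous at $v = e$, so such a $\delta$ exists and can be arranged to depend continuously on $x_0$. Define $\mathcal{O}_\epsilon(y_0) := \pi(g_0 V_\epsilon)$.

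Next I unwind the hypothesis $y\cdot h \in \mathcal{O}_\epsilon(y_0)$ with $\|h\| \le R$: it asserts that there exist $\gamma_0 \in \Gamma$ and $v \in V_\epsilon$ such that $\gamma_0 g h = g_0 v$ in $G$. Multiplying both sides on the right by $H$ and using $hH = H$ yields
$$
\gamma_0 g H = g_0 v H,
$$
so $\gamma_0 \cdot x = g_0 v H$ lies within $\epsilon$ of $x_0$ by the choice of $V_\epsilon$. Setting $\gamma := \gamma_0^{-1} \in \Gamma$ gives $\gamma^{-1} x = \gamma_0 \cdot x$, hence $\|\gamma^{-1} x - x_0\|_\infty \le \epsilon$.

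Finally, to control $\|\gamma\|$ I solve for $\gamma = \gamma_0^{-1} = g h v^{-1} g_0^{-1}$ and invoke submultiplicativity of the norm:
$$
\|\gamma\| \le \|g\| \cdot \|h\| \cdot \|v^{-1}\| \cdot \|g_0^{-1}\| \le c(x,x_0)\, R,
$$
with $c(x,x_0) := \|g\|\, \|g_0^{-1}\| \sup_{v \in V} \|v^{-1}\|$. The main technical point will be arranging the lifts $g, g_0$ of $x, x_0$ and the auxiliary sets $V$, $V_\epsilon$ continuously in $(x, x_0)$ so that $c(x, x_0)$ is uniformly bounded on compact subsets of $X \times X$; this is handled by local sections of $G \to G/H$ together with a finite-covering argument. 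Apart from this calibration the proof is a direct group-theoretic computation.
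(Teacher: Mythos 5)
Your argument is correct and is precisely the intended one: the paper itself gives no proof of this Proposition (it is quoted from \cite{GGN3}), but the standard duality/unfolding argument you give --- realize $\mathcal{O}_\epsilon(y_0)$ as the projection of a small set $g_0V_\epsilon\subset G$, unfold $y h\in\mathcal{O}_\epsilon(y_0)$ to $\gamma_0 gh=g_0v$, project to $G/H$ to get $\|\gamma_0 x-x_0\|_\infty\le\epsilon$, and solve $\gamma=\gamma_0^{-1}=ghv^{-1}g_0^{-1}$ for the norm bound --- is exactly how the correspondence is established in \cite{GGN3} (and how the analogous Proposition \ref{p:ggn} is proved in \cite{GGN1}). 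The only points to make explicit are that submultiplicativity of the restricted matrix norm holds up to a constant that is absorbed into $c(x,x_0)$, and that the uniformity of $c(x,x_0)$ on compacta uses lifts $g,g_0$ furnished by a section of $G\to G/H$ bounded on compact sets, both of which you already flag.
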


Hence, the problem of establishing upper bounds on $\kappa_\Gamma(x,x_0)$
is closely related to the shrinking target problem for the orbit $yH$ in $Y$
with respect to the family of neighbourhoods $\mathcal{O}_\epsilon(y_0)$. 
The later can be approached using a quantitative mean ergodic theorem 
for the action of $H$ on $Y$. 

Now we assume that the discrete group $\Gamma$ has finite covolume in $G$ and
consider a family of averaging operators
$$
\pi_Y(\beta_t):L^2(Y)\to L^2(Y): \phi\mapsto \frac{1}{m_H(H_t)}\int_{H_t} \phi(y h)\, dm_H(h).
$$
Let us suppose that there exist $C,\theta>0$
such that for all sufficiently large $t$,
\begin{equation}\label{eq:mean1}
\left\|\pi_Y(\beta_t)(\phi)-\int_Y \phi\, dm_Y \right\|\le C\, m_H(H_t)^{-\theta} \|\phi\|_2,
\end{equation}
where $m_Y$ denotes the normalised Haar measure on $Y$.
Let $\theta_\Gamma(X)$ denote the supremum over $\theta$'s for which the estimate (\ref{eq:mean}) holds.

\begin{Theorem}[Ghosh--Gorodnik--Nevo \cite{GGN3}]\label{th:ggn3}
For every $x_0\in X$ and almost every $x\in X$,  
$$
\kappa_\Gamma(x,x_0)\le (2\theta_\Gamma(X))^{-1}\frac{\dim(X)}{a(X)}.
$$
Moreover, the constants $c(x,x_0)$ and $\epsilon_0(x,x_0)$ in Definition \ref{def:DE}
are uniform over $x,x_0$ in compact sets. 
\end{Theorem}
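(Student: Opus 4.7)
The plan is to combine the dynamical correspondence in the Proposition above with the quantitative mean ergodic estimate~\eqref{eq:mean1}, so that approximations for $x$ are produced by fast visits of the $H$-orbit through $y = \Gamma g$ to shrinking neighbourhoods $\mathcal{O}_\epsilon(y_0)$ in $Y$.

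First I would construct, for each small $\epsilon > 0$, a nonnegative bump function $\phi_\epsilon \in L^2(Y)$ supported in $\mathcal{O}_\epsilon(y_0)$. Because locally near $y_0$ the fibration $G \to G/H \simeq X$ trivialises, with the correspondence pairing the $\epsilon$-scale on $X$ with a transverse slice of dimension $\dim(X)$, one can arrange
$$\int_Y \phi_\epsilon\, dm_Y \asymp \epsilon^{\dim(X)} \quad \text{and} \quad \|\phi_\epsilon\|_2^2 \asymp \epsilon^{\dim(X)}.$$
Applying~\eqref{eq:mean1} with any $\theta < \theta_\Gamma(X)$, followed by Chebyshev's inequality, the ``bad set''
$$E_{t,\epsilon} := \left\{y \in Y : \pi_Y(\beta_t)\phi_\epsilon(y) < \tfrac{1}{2} \int_Y \phi_\epsilon\, dm_Y \right\}$$
would satisfy $m_Y(E_{t,\epsilon}) \ll m_H(H_t)^{-2\theta}\, \epsilon^{-\dim(X)}$.

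Next, fixing any $\kappa > (2\theta_\Gamma(X))^{-1}\dim(X)/a(X)$ and choosing $\theta$ close enough to $\theta_\Gamma(X)$, I would select dyadic scales $\epsilon_n := 2^{-n}$ and $t_n$ with $e^{t_n} \asymp \epsilon_n^{-\kappa}$. Using $\log m_H(H_t) = a(X) t + o(t)$, the exponent in $m_Y(E_{t_n,\epsilon_n}) \ll \epsilon_n^{2\theta a(X) \kappa - \dim(X) + o(1)}$ becomes strictly positive, so $\sum_n m_Y(E_{t_n,\epsilon_n}) < \infty$. The Borel--Cantelli lemma then gives, for $m_Y$-a.e.\ $y$ and all large $n$, some $h \in H_{t_n}$ with $yh \in \mathcal{O}_{\epsilon_n}(y_0)$; the dynamical correspondence converts this into $\gamma \in \Gamma$ satisfying $\|\gamma^{-1}x - x_0\|_\infty \le \epsilon_n$ and $\|\gamma\| \le c(x,x_0)\, \epsilon_n^{-\kappa}$. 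A dyadic interpolation passes from $\epsilon_n$ to general $\epsilon$, a transfer of the null set along the fibration pushes the statement from $Y$ down to $X$, and letting $\kappa \downarrow (2\theta_\Gamma(X))^{-1}\dim(X)/a(X)$ completes the bound.

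The main obstacle will be the first step: producing $\phi_\epsilon$ whose $L^1$- and $L^2$-norms are both of the correct order $\epsilon^{\dim(X)}$, with implicit constants uniform as $y_0$ ranges over a compact subset of $X$, so as to yield the uniform-in-compact-sets constants $c(x,x_0)$ and $\epsilon_0(x,x_0)$ of Definition~\ref{def:DE}. This requires a careful analysis of the geometry of $\mathcal{O}_\epsilon(y_0)$ transverse to the $H$-orbit: any slack in the $L^1/L^2$ ratio would weaken the exponent, while loss of uniformity in $y_0$ would spoil the final assertion of the theorem.
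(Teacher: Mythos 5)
Your proposal follows essentially the same route the paper indicates for Theorem \ref{th:ggn3}: the paper (deferring full details to \cite{GGN3}) reduces the exponent bound via the dynamical correspondence to a shrinking-target problem for the $H$-orbit in $Y=\Gamma\backslash G$, solved by applying the quantitative mean ergodic theorem (\ref{eq:mean1}) to functions supported on tube neighbourhoods $\mathcal{O}_\epsilon(y_0)$ of measure $\asymp\epsilon^{\dim(X)}$, followed by Chebyshev, Borel--Cantelli along the scales $e^{t_n}\asymp\epsilon_n^{-\kappa}$, and a Fubini transfer of the null set from $Y$ to $X$. Your outline, including the identification of the $L^1/L^2$ ratio and the uniformity over compact sets as the delicate points, is a correct implementation of that strategy.
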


Looking at the case $\theta_\Gamma(X)=1/2$ we have the following sample conclusion:

\begin{Cor}\label{cor:ggn3}
If $G$ and the stability group $H$ are semisimple and non-compact, and the representation of  $H$ 
on $L^2_0(\Gamma\backslash G)$ is a tempered representation of $H$, then the Diophantine exponent of $\Gamma$-action on $X=G/H$ is best possible, and is given by 
$$
\kappa_\Gamma(x,x_0) = \frac{\dim(X)}{a(X)}
$$
for every $x_0$ and almost every $x$ in $ X$.
\end{Cor}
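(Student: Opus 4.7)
The plan is straightforward: combine the lower bound (\ref{eq:llow}) with the upper bound of Theorem \ref{th:ggn3}. The lower bound gives $\kappa_\Gamma(x,x_0) \geq \dim(X)/a(X)$ for every $x$ and a.e.\ $x_0$, and by a symmetric geometric pigeonhole argument (using that $\Gamma$ has finite covolume in $G$, so that the counting bound on $\Gamma^{-1}x \cap B(x_0,\epsilon)$ is essentially uniform in $x_0$) the same lower bound holds for a.e.\ $x$ and every $x_0$. The upper bound of Theorem \ref{th:ggn3} reads $\kappa_\Gamma(x,x_0) \leq (2\theta_\Gamma(X))^{-1}\dim(X)/a(X)$, so the entire corollary reduces to establishing $\theta_\Gamma(X) \geq 1/2$ under the temperedness hypothesis.

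To prove $\theta_\Gamma(X) \geq 1/2$, I would invoke the Cowling--Haagerup--Howe theory of matrix coefficients of tempered unitary representations of semisimple Lie groups. The temperedness of the $H$-representation on $L^2_0(\Gamma\backslash G)$ implies that for $K$-finite vectors $u,v$ (with $K\subset H$ a maximal compact), the matrix coefficient is bounded by
\[
|\langle \pi_Y(h)u,v\rangle| \leq C\,(\dim Ku)^{1/2}(\dim Kv)^{1/2}\,\Xi_H(h)\,\|u\|\,\|v\|,
\]
where $\Xi_H$ is the Harish-Chandra spherical function. Combined with the standard estimate $\int_{H_t}\Xi_H(h)^2\,dm_H(h) \ll m_H(H_t)\,t^{D}$ for some constant $D=D(H)$, a Cauchy--Schwarz application in the definition of the averaging operator yields
\[
\|\pi_Y(\beta_t)\phi - P\phi\|_2 \leq C'\, m_H(H_t)^{-1/2}\, t^{D/2}\,\|\phi\|_2
\]
on a dense subspace of $L^2_0(Y)$, and by density on all of $L^2_0(Y)$. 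The polynomial factor $t^{D/2}$ can be absorbed into a slightly weaker exponent (the supremum defining $\theta_\Gamma(X)$ is taken only over $\theta$'s satisfying a pure power-law decay in $m_H(H_t)$), giving $\theta_\Gamma(X)\geq 1/2$. Plugging this into Theorem \ref{th:ggn3} produces the matching upper bound, and combining with the lower bound yields the equality claimed in the corollary.

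The main obstacle is twofold. First, one must pass from $K$-finite matrix coefficient bounds to an operator-norm bound valid on all of $L^2_0(Y)$; this requires an approximation argument together with uniformity of the constants in the $K$-isotypic dimension. Second, one needs a comparison between the ball $H_t=\{h\in H:\log\|h\|\leq t\}$ in the ambient matrix norm and the intrinsic Cartan-type ball on which the $\Xi_H$ estimate is naturally formulated, and one must verify an admissibility-type condition on the family $\{H_t\}$ (in particular that $m_H(H_{t+1})\ll m_H(H_t)$) so that the polynomial correction $t^{D/2}$ is truly absorbed rather than compounding. Both of these steps are by now well understood in the theory of effective mean ergodic theorems for semisimple group actions developed by Nevo and Gorodnik--Nevo, but a careful bookkeeping is required; once carried through, Theorem \ref{th:ggn3} applies with $\theta=1/2$ and delivers the sharp Diophantine exponent.
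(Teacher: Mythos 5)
Your route is the same as the paper's: the corollary is read off from Theorem \ref{th:ggn3} once the temperedness hypothesis is converted, via the Cowling--Haagerup--Howe bound of matrix coefficients by the Harish-Chandra function $\Xi_H$, into the mean ergodic estimate (\ref{eq:mean1}) with exponent arbitrarily close to $\theta=1/2$, and this is matched against the lower bound (\ref{eq:llow}); this is exactly how the paper uses the corollary (cf.\ the proof of Proposition \ref{ex:3} and the $\Xi$-integration in Proposition \ref{ex:4}). Your handling of the quantifiers (running the pigeonhole/Borel--Cantelli argument in $x$ for fixed $x_0$, using the uniform coarse lattice count and the invariance of $m_X$) is also the intended reading.

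One step is misstated and, as written, would fail: the ``standard estimate'' $\int_{H_t}\Xi_H(h)^2\,dm_H(h)\ll m_H(H_t)\,t^{D}$ is trivially true (since $\Xi_H\le 1$) and, fed into your Cauchy--Schwarz step, gives only $\|\pi_Y(\beta_t)\phi-P\phi\|_2\ll t^{D/2}\|\phi\|_2$, i.e.\ no decay at all and no positive $\theta$. The correct input is that $\Xi_H$ is almost square-integrable: in Cartan coordinates $\Xi_H(e^a)\ll (1+\|a\|)^{d_0}e^{-\rho(a)}$ while the radial density is $\ll e^{2\rho(a)}$, so $\int_{H_t}\Xi_H^2\,dm_H\ll (1+t)^{D}$ (no factor $m_H(H_t)$); equivalently $\Xi_H\in L^{2+\eta}(H)$ for every $\eta>0$, so that H\"older gives $\frac{1}{m_H(H_t)}\int_{H_t}\Xi_H\,dm_H\ll m_H(H_t)^{-1/2+\eta}$, which is the form used in the paper's (\ref{eq:b_gen}). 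With that correction your Cauchy--Schwarz computation does give $m_H(H_t)^{-1/2}t^{D/2}$ and hence $\theta_\Gamma(X)\ge 1/2$ as claimed. On your ``main obstacle'': the passage from $K$-finite coefficient bounds to an operator-norm bound is most cleanly done not by density bookkeeping but by noting that temperedness means weak containment in $\lambda_H$, so $\|\pi(\beta_t)\|\le\|\lambda_H(\beta_t)\|$, and for the bi-$K$-invariant nonnegative averages $\beta_t$ (the norm is chosen so that $H_t$ is bi-$K$-invariant) one has $\|\lambda_H(\beta_t)\|\le\frac{1}{m_H(H_t)}\int_{H_t}\Xi_H\,dm_H$; this is the mechanism invoked in the paper.
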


We note that the results established in Theorem \ref{th:ggn3} and Corollary \ref{cor:ggn3}
hold not only for linear and affine action on varieties in $\R^d$, but also for linear and affine actions over other local
fields and more generally for actions on general homogeneous spaces of locally compact groups
satisfying some natural assumptions. We refer to \cite{GGN3} for general statements of these
results. In this general setting one defines the dimension of $X$ as
$$
\dim(X)=\limsup_{\epsilon\to 0}\frac{\log m_{X}(B_\epsilon(x_0))}{\log \epsilon},
$$
where $B_\epsilon(x_0)$ is the $\epsilon$-ball around $x_0$ in $X$, and $m_X$ is the measure on $X$
induced by the chosen measures on $G$ and $H$. We note that under the assumption imposed in \cite{GGN3}
this limit is independent of $x_0$.

The bound obtained in Theorem \ref{th:ggn3} for the approximation exponent depends  
 on spectral information pertaining to automorphic representations 
$L^2(\Gamma\backslash G)$ as well as arithmetic data related to the group and the lattice. As such, it is a highly
non-trivial task to compute these parameters in any given example. One of the main advantages in our approach is that this task is feasible in many interesting cases, and in fact often leads to best possible results. This necessarily involves a detailed study  of the
spectral theory of unitary representations of semisimple groups, and we now turn to demonstrating the method in some cases.

\ignore{

\section{Dense orbits on algebraic varieties}\label{sec:dense}
Let $F$ be a separable  locally compact  field, complete with respect to an absolute value $\abs{\cdot}_F$. Consider an algebraically connected linear algebraic group  defined over $F$, denoted $G\subset GL_n(F)$, and let $\Gamma\subset G$ be a discrete lattice subgroup. $G$ being unimodular, denote a choice of Haar measure on $G$ by $m_G$. 
Let $H\subset G$ be a Zariski closed unimodular subgroup, and choose Haar measure $m_H$ on $H$. It follows that the homogeneous space $G/H := V$ carries a $G$-invariant Radon measure, unique up to multiplication by a positive scalar. We denote by $m_V$ the unique $G$-invariant  measure on $V$ satisfying, for every compactly supported continuous function $f$ on $G$  :
$$\int_G fdm_G(g)=\int_{V}\left(\int_Hf(gh)dm_H(h)\right) dm_V(gH)\,.$$

The absolute value on $F$ gives rise to a notion of a norm, denoted $\norm{\cdot}$, on the vector space $F^k$ for any $k$, and thus also 
on $M_k(F)$.  We denote by $\abs{\cdot } : G\to \R_+$ the restriction of a norm on $M_k(F)$ to $G$, and we also  set  $D(g)=\log \abs{g}$.

Denote $[H]=x_0\in V=G/H$, namely  $H=\text{St}_G(x_0)$. We fix  a metric $\text{dist}$ on $V$, and 
 assume that for every compact set $\Omega\subset G$, and every compact set $S\subset V$ there exists a constant $C_0(\Omega,S)$ satisfying 
$$\sup_{g\in \Omega}\sup_{x,x_0\in S}\left( \text{dist} (gx,gx_0)+\text{dist} (g^{-1}x,g^{-1}x_0)\right)\le C_0(\Omega,S)\text{dist}(x,x_0)\,.$$
In the examples we will present below, the variety $V=G/H$ would typically be an affine subvariety of $F^k$ for some $k$, and the distance $\text{dist}$ will be taken as the restriction to $V$ of a norm  on $F^k$.

Let $Y=\Gamma\setminus G$ be the homogeneous space determined by the lattice subgroup, endowed with a finite $G$-invariant measure. We denote by $m_{\Gamma\setminus G}$ the unique $G$-invariant measure having the following relation with Haar measure $m_G$. For every compactly supported continuous function $f$ on $G$ :
$$\int_G f(g)dm_G(g)=\int_{\Gamma\setminus G}\left(\sum_{\gamma\in \Gamma}f(\gamma g)\right) dm_{\Gamma\setminus G}(g)\,.$$
Note that this choice of the invariant  measure $m_{\Gamma\setminus G}$ is not necessarily a probability measure, and its total mass $v(\Gamma)$ is equal the Haar measure  of a fundamental domain 
of $\Gamma$ in $G$. Thus $m_{\Gamma\setminus G}(\Gamma\setminus G)=v(\Gamma)$, and we denote by $\widetilde{m}_{\Gamma\setminus G}$ the probability measure 
$m_{\Gamma \setminus G}/v(\Gamma)$.

We now turn to describing the parameters that appear naturally in  the Diophantine approximation problems that will be described below. 

\subsection{Speed of approximation on homogeneous spaces}\label{sec:speed} 
Fix an observation point $x_0\in V$, and another point $x\in V$ with the orbit $\Gamma\cdot x$ dense in $V$. Consider the problem of quantitative approximation of the point $x_0\in V$ by points in the orbit, namely establishing an  estimate of the form $\text{dist}(\gamma^{-1}x,x_0)< \epsilon$ with $\abs{\gamma}< \epsilon^{-\alpha}$, as $\epsilon\to 0$, with a fixed finite $\alpha$. 

Note that this problem is meaningful for any dense orbit $\Gamma\cdot x$, but many possibilities may arise in the generality discussed here. One is that every orbit of $\Gamma$ is dense in $V$, and another is that only almost every orbit is dense, but a dense set of points $x\in V$ have non-dense orbits. There could also be a dense set of points $x^\prime \in V$ where $\Gamma\cdot x^\prime$ is in fact closed, although $\Gamma\cdot x$ is dense and $\text{dist} (x^\prime,x)$ is arbitrarily small.

We consider the family of sets $G_t=\{g\in G\,;\, D(g) < t\}$, which are sets of positive finite Haar measure on $G$. Their intersection with $H$, namely $H_t=\{h\in H\,;\, D(h) < t\}$,  are sets of positive finite Haar measure  in $H$ (which is  assumed unimodular). Given this general set-up, we now introduce the following natural  (and necessary) assumptions which are satisfied in great generality, as we will indicate below.\\ 

\subsubsection{Coarse admissibility} We assume that the sets $G_t$ have the following stability property : for every compact set $\Omega\subset G$, there exists $c_1(\Omega) =c_1 > 0$ such that  $\Omega G_t \Omega \subset G_{t+c}$ for all $t > t_\Omega$.\\

\subsubsection{Coarse exponential volume growth estimate} For the sets $H_t$ we will assume the following : 
there exist $ a >  0$ such that for all $\eta > 0$  and for all $t \ge t_\eta$ 
$$C_2(\eta)^{-1} e^{t(a-\eta)}\le m_H(H_t)\le C_2(\eta) e^{t(a+\eta)}$$ 
or in other words $\lim_{t\to \infty} \frac1t \log m_H(H_t)=a$.\\ 

\subsubsection{Upper local dimension}  We assume  the existence of a family of neighborhoods $\mathcal{O}_ \epsilon(x_0)$ of $x_0\in V$ satisfying 
$$m_{V}(\mathcal{O}_\epsilon )\ge C_3(\eta) \epsilon^{d+\eta},$$ 
for every $\eta$ sufficiently small, and $\epsilon < \epsilon_\eta$. Equivalently we assume that  
$$\limsup_{\epsilon\to 0}\frac{\log m_{V}(\mathcal{O}_\epsilon)}{\log \epsilon}=d.$$ The constant $d$ is called the upper local dimension of $V$ at $x_0$. We assume a uniform bound for  the upper local dimension and for $C_3(\eta)$ above as $x_0$ varies in compact subset of $X$. 

Typically the neighborhoods in question will simply be defined by $\mathcal{O}_\epsilon(x_0)=\{x\in V\,;\, \text{dist}(x,x_0) < \epsilon\}$.  In the examples we consider below, $d$ is equal to the real dimension of the manifold $V=G/H$ when $G$ is a Lie group, $H$ a closed subgroup, and to the algebraic dimension of the variety $V$ when $F$ is totally disconnected.\\

\subsubsection{Mean ergodic theorem}\label{sec:mean}

We  consider the probability measure $\tilde{m}_{\Gamma\setminus G}$ on $\Gamma\setminus G$ and define bounded operators 
$\pi_Y(\beta_t) :L^2(\Gamma\setminus G)\to L^2(\Gamma\setminus G)$, given as follows :
$$\pi_Y(\beta_t)f(y)=\frac{1}{m_H(H_t)}\int_{H_t}f(yh)dm_H(h)\,\,,\,\, y\in \Gamma\setminus G\,.$$

Letting $L^2_0(\Gamma\setminus G)$ denote the subspace of mean-zero functions, we assume that there exists $\theta > 0$ such that for every $\eta > 0$ there exists a constant $C_4(\eta)$, satisfying, for all $t \ge t_\eta$ 
$$\|\pi_Y(\beta_t)f\|_{L^2_0(\Gamma\setminus G)}\le C_4(\eta) m_H(H_t)^{-\theta +\eta}\|f\|_{L^2(\Gamma\setminus G)}$$
We remark that the foregoing estimate is a consequence of the spectral gap property for the representation of $H$ is $L^2_0(\Gamma \setminus G)$, and thus holds in great generality. We will call $\theta$ the \emph{spectral parameter}.

Our spectral assumption amounts to the validity of the quantitative mean ergodic theorem for the averaging operators $\pi_Y(\beta_t)$, which in turn implies of course the ergodicity of $H$ on $\Gamma \setminus G$. By the duality principle for homogeneous spaces,  the ergodicity of $\Gamma $ on $G/H$ follows, and in particular, almost every $\Gamma$-orbit in $V=G/H$ is dense.\\

To study the distribution of  dense lattice orbits in the homogeneous space $V=G/H$, we will need to employ a Borel measurable section 
$\mathsf{s}_V : G/H \to G$. Letting $\mathsf{p}_V : G \to G/H$ denote the canonical projection, we have $ \mathsf{p}_V\circ \mathsf{s}_V= I_V$, and for each $g\in G$, we have 
$g=\mathsf{s}_V(gH) \mathsf{h}_V(g)$, where $\mathsf{h}_V : G\to H$ is determined by the section $\mathsf{s}_V$, and is Borel measurable. 
Furthermore, note that $\mathsf{h}_V$ is $H$-equivariant on the right, namely $\mathsf{h}_V(gh)=\mathsf{h}_V(g)h$, for $g\in G$ and $h\in H$. 
We assume also that the section is bounded on compact sets in $V$, and that the section is continuous in a sufficiently small neighborhood of $x_0$. Recall that $x_0$ is the point in $V=G/H$ with stability group $H$, and we assume that $\mathsf{s}_V (x_0)=e=\mathsf{s}_V([H])$. 

 Note that using the section we obtain a measure-theoretic isomorphism 
between  $\mathsf{p}_V^{-1}(\mathcal{O}_\epsilon(x_0))$ and the direct product $\mathcal{O}_\epsilon(x_0)\times H$, with Haar measure on $G$ taken on the left, and on the right the product of the invariant measure $m_V$ on $V$ (restricted to $\mathcal{O}_\epsilon(x_0))$ and Haar measure $m_H$ on $H$.\\

\subsubsection{Coarse upper bound for lattice points.}

We will also assume the following assumption on lattice points. Let $\mathcal{O} \subset V$ be a bounded open. Then $\mathsf{s}_V(\mathcal{O})H_t\subset G$ are bounded sets 
in $G$. we will assume that for a fixed bounded open set $\mathcal{O}\subset V$, there exists $c_1^\prime=c_1^\prime(\mathcal{O})$ and $C^\prime_1(\mathcal{O})$ such that for all $t > t_{\mathcal{O}}$ 

$$ \abs{\Gamma\cap \left(\mathsf{s}_V(\mathcal{O})H_t \right)}\le C^\prime_1(\mathcal{O})m_H(H_{t+c_1^\prime})\,. $$
 We note that such an upper bound constitutes  a very modest first step towards the solution of the lattice point counting problem in these domains, and is thus holds in great  generality.

With all the preliminaries in place,  let us now define the exponent of Diophantine approximation $\kappa$  for the action of the lattice $\Gamma$ on $V=G/H$.

Let us now explicate the hypotheses  necessary for our discussion. 
 
{\bf Standing assumptions : }  Let  $G\subset GL_n(F)$ be an algebraic group defined over $F$, $H$ a Zariski closed unimodular subgroup, $\Gamma$ a discrete lattice in $G$, and $\abs{\cdot} : G\to \R_+$ the restriction of a norm on $M_n(F)$. The sets $G_t$ are coarsely admissible, and assume that $H_t=G_t\cap H$ have coarse exponential volume growth with rate $a$. Assume  that the sets $H_t$ satisfy the quantitative mean ergodic theorem in $L^2(\Gamma\setminus G)$ with rate $\theta$. Denote the upper local dimension of $V=G/H$ w.r.t. the family $\mathcal{O}_\epsilon(x_0)$ by $d$.

We now state the following consequences of the  two main results on Diophantine exponents, proved in  \cite{GGN3} in greater generality.

\begin{Theorem}\label{thm:upper}
Under the assumptions stated above, the Diophantine exponent satisfies the upper bound $\kappa\le  \frac{d}{2\theta a}$. 
\end{Theorem}

We also state the following complementary result regarding the lower bound : 
\begin{Theorem}\label{thm:lower}
Under the assumptions stated above, the Diophantine exponent satisfies the lower bound $\kappa \ge \frac{d}{a}$. 
\end{Theorem}

Looking at the case $\theta=1/2$ we have the following sample conclusion. 

\begin{Cor}
If $G$ and the stability group $H$ are semi simple and non-compact, and the restriction of the automorphic representation $\pi^0_{G/\Gamma}$ to  $H$ is a tempered representation of $H$, then the Diophantine exponent of any irreducible lattice of $G$ in its action on $G/H$ is best possible, and is given by $\kappa =\frac{d}{a}$. 
\end{Cor}

\noindent As mentioned above, in \S \ref{sec:overview}, we will provide an overview of the proofs of Theorem \ref{thm:upper}, Theorem \ref{thm:lower} as well as our related earlier works \cite{GGN1, GGN2}.

}

\section{Examples of Diophantine exponents}\label{sec:examples}

In this section we give some concrete examples of explicit estimates of Diophantine exponents arising in Theorem
\ref{th:ggn3}, complementing some of the examples presented in \cite{GGN3}.
For simplicity of exposition, let  now $F$ denote the fields $\R$, $\C$ or $\Q_p$.\footnote{Although we
  do not treat this case, the methods in \cite{GGN3} are very general and also hold for local fields of
  positive characteristic. In this setting one also has the advantage of better spectral estimates in
  certain instances, for example arising from work of Drinfeld and Lafforgue.}, Let $G$ be a linear algebraic subgroup
of the group $\hbox{SL}_n(F)\rsemi F^n$ considered as a group of affine transformations of $F^n$.
We fix a norm on $\R^n$ and $\C^n$, and a (vector space) norm on $\hbox{M}_n(\R)$ and $\hbox{M}_n(\C)$. In the local field case we take the standard  valuation on the field, and the standard maximum norm on the linear space $F^n$, and on $\hbox{M}_n(F)$.  
We view the affine group $\hbox{SL}_{n}(F)\rsemi F^{n}$, $ n \ge 2$  as a subgroup of $\hbox{SL}_{n+1}(F)$, specifically as  the stability group of the standard basis vector $e_{n+1}$,  and consider norms on it by restriction from $\hbox{SL}_{n+1}(F)\subset \hbox{M}_{n+1}(F)$. 
Let $X\subset F^n$ be an affine subvariety which is invariant and homogeneous under the $G$-action,
so that $X\simeq G/H$ where $H$ is closed subgroup of $G$.
We define the distance on $X$ by restricting the norm defined on $F^n$. Let $\Gamma$ be a lattice
subgroup of $G$ such that almost every  $\Gamma$-orbit is dense in $X$.   

We now proceed to describe several examples of classical Diophantine approximation problems in this
context and estimate the exponents $\kappa_\Gamma(x,x_0)$ defined in Definition \ref{def:DE}.

We remark that any choice of norm  on $\R^n$ or $\C^n$ (and hence on $X$) and on $\hbox{M}_n(\cdot)$ (and hence on $G$
and $H$) does not change the estimate of the exponent. We will thus  choose the norm most convenient for us, namely one whose restriction to $H$ has convenient properties.

For simplicity, we use notation $A\ll B$ if $A\le c\, B$ for some constant $c$,
and $A\asymp B$ if $c_1\, B\le A\le c_2\, B$ for some constants $c_1,c_2$.

\subsection{Homogeneous Diophantine approximation in linear space} 
Consider first  the classical case of homogeneous Diophantine approximation  in the linear action of a lattice subgroup of $\SL_n(F)$ on $F^n\setminus \set{0}$. 

\begin{Prop}\label{ex:4}
For $n \ge 3$  the exponent of Diophantine approximation for an arbitrary lattice
$\Gamma$ in $G=\SL_n(F)$ acting on $X=F^n\setminus \set{0}$ is estimated by 
$$
\frac{n}{(n-1)^2}\le \kappa_\Gamma(x,x_0) \le \frac{n}{n-1}
$$
for almost every $x,x_0\in F^n\setminus \set{0}$. 
\end{Prop}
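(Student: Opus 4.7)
The plan is to identify $X\simeq G/H$ and apply the lower bound \eqref{eq:llow} together with Theorem \ref{th:ggn3}; the content is therefore in computing the volume growth exponent $a(X)$ and in producing a suitable lower bound for the spectral exponent $\theta_\Gamma(X)$. Since $G=\SL_n(F)$ acts transitively on $F^n\setminus\set{0}$, I fix $x_0=e_n$, whose stabilizer is
$$
H=\operatorname{Stab}_G(e_n)=\left\{\begin{pmatrix}A & 0\\ b^T & 1\end{pmatrix}:\,A\in\SL_{n-1}(F),\,b\in F^{n-1}\right\}\cong\SL_{n-1}(F)\ltimes F^{n-1},
$$
so that $X\cong G/H$ and $\dim X=n$. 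The restriction of the matrix norm to $H$ satisfies $\|h\|\asymp\max(\|A\|,|b|,1)$, and Haar measure on $H$ is the product of Haar on $\SL_{n-1}(F)$ with the additive Haar on $F^{n-1}$. Using the standard asymptotics $\vol\set{A\in\SL_{n-1}(F):\|A\|\le T}\asymp T^{(n-1)(n-2)}$ and $\vol\set{b\in F^{n-1}:|b|\le T}\asymp T^{n-1}$ (with the natural local normalizations in each case), I find $m_H(H_t)\asymp e^{(n-1)^2\,t}$, hence $a(X)=(n-1)^2$ and $\dim(X)/a(X)=n/(n-1)^2$. The lower bound in the proposition then follows at once from \eqref{eq:llow}.

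For the upper bound, Theorem \ref{th:ggn3} reduces matters to establishing the spectral estimate $\theta_\Gamma(X)\ge \tfrac{1}{2(n-1)}$, which then yields
$$
\kappa_\Gamma(x,x_0)\le (2\theta_\Gamma(X))^{-1}\cdot\frac{n}{(n-1)^2}\le (n-1)\cdot\frac{n}{(n-1)^2}=\frac{n}{n-1}.
$$
To produce this spectral bound, I decompose $L^2(\Gamma\backslash G)=\C\cdot 1\oplus L^2_0$, and via the standard Cauchy--Schwarz unfolding of $\pi_Y(\beta_t)\pi_Y(\beta_t)^*$ the operator norm of $\pi_Y(\beta_t)$ on $L^2_0$ is controlled by
$$
\frac{1}{m_H(H_t)^2}\int_{H_t}\int_{H_t}\abs{\inn{\pi(h_2^{-1}h_1)\phi,\phi}}\,dm_H(h_1)\,dm_H(h_2).
$$
Combining this with the pointwise matrix-coefficient bound $\abs{\inn{\pi(g)\phi,\phi}}\ll \Xi_G(g)^{\alpha}\|\phi\|_2^2$ on $L^2_0$, valid with some $\alpha>0$ by the automorphic spectral gap of $\SL_n$ (available in the higher-rank case $n\ge 3$ via Margulis arithmeticity and known bounds on automorphic exponents, cf. \cite{Sarnak,BloBru}), the entire matter reduces to the single integral estimate
$$
\int_{H_t}\Xi_G(h)^{\alpha}\,dm_H(h)\ll m_H(H_t)^{1-\frac{1}{n-1}}.
$$

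The principal obstacle is precisely this last integral estimate. The Harish-Chandra function $\Xi_G$ of $\SL_n(F)$ has well-understood exponential decay along the Cartan subgroup of $G$, but $H$ is not semisimple and its intersection with a Cartan of $G$ has rank only $n-2$; the unipotent radical $F^{n-1}\subset H$ is needed to cover the remaining direction. Writing $h\in H_t$ via a Cartan decomposition of $G$ and integrating the decay of $\Xi_G$ against the measure $m_H$ computed in the first step is expected to produce the exponent $1/(n-1)$, which precisely captures the gap between the semisimple volume contribution $(n-1)(n-2)$ and the full growth $(n-1)^2=a(X)$. The argument is uniform over $F\in\set{\R,\C,\Q_p}$ with the appropriate local normalizations and yields $\theta_\Gamma(X)\ge \tfrac{1}{2(n-1)}$, completing the upper bound.
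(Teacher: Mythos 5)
Your Part I (identifying $H=\SL_{n-1}(F)\ltimes F^{n-1}$, the product volume computation giving $a(X)=(n-1)^2$, and the lower bound via (\ref{eq:llow})) matches the paper. The upper bound, however, has a genuine gap exactly where the real work lies: the quantitative spectral estimate $\theta_\Gamma(X)\ge \tfrac{1}{2(n-1)}$. You invoke a matrix-coefficient bound $\abs{\inn{\pi(g)\phi,\phi}}\ll \Xi_G(g)^{\alpha}$ with an \emph{unspecified} $\alpha>0$ coming from ``the automorphic spectral gap,'' and then assert the reduction to $\int_{H_t}\Xi_G(h)^{\alpha}\,dm_H(h)\ll m_H(H_t)^{1-\frac{1}{n-1}}$, which you yourself flag as the principal obstacle and only say is ``expected'' to hold. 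But this estimate cannot follow from an unquantified spectral gap: the saving in the integral is governed by $\alpha$ (one essentially needs $\Xi_G^{\alpha}|_H\in L^{\,n-1+\eta}(H)$, uniformly over all lattices $\Gamma$), and for small $\alpha$ the claimed exponent simply fails. Moreover, estimating $\Xi_G$ restricted to $H$ is delicate precisely because of the unipotent directions $F^{n-1}$, along which the Harish-Chandra function of $G$ decays only polynomially; none of this is carried out. There is also a secondary gloss: pointwise bounds of Cowling--Haagerup--Howe type hold for $K$-finite vectors (with constants depending on $K$-types), so passing from them to an operator-norm bound on all of $L^2_0$ requires the bi-$K$-invariance/spectral-transfer mechanism, which you do not address.

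The paper avoids all of this by working inside $H$ rather than with $\Xi_G$. For $n\ge 4$ it uses that every unitary representation of $\SL_{n-1}(F)$ without invariant vectors is $L^p$ for $p>2(n-2)$, so by the tensor-power/weak-containment argument and \cite{chh} the $K$-finite coefficients of $\pi|_{\SL_{n-1}(F)}$ are dominated by $\Xi_{n-1}^{1/(n-2)}$ --- an \emph{explicit} exponent, uniform over all lattices; for $n=3$ a separate argument is needed, namely Kazhdan's observation that absence of $F^{2}$-invariant vectors forces $\pi|_{\SL_2(F)}$ to be tempered (your write-up does not single out $n=3$ at all, and the $L^p$-route for $\SL_{n-1}$ would not cover it). Then, because the bound is independent of the translation part $v_1$ and $H_t$ is a product, a one-line H\"older inequality on the $\SL_{n-1}$-factor yields $\norm{\pi(\beta_t)}\ll m_{\SL_{n-1}(F)}(\set{\|h_1\|\le e^t})^{-\frac{1}{2(n-2)+\eta}}\ll m_H(H_t)^{-\frac{1}{2(n-1)}+\eta'}$, with no need to understand the decay of any spherical function along the unipotent radical. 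Your route could in principle be repaired by substituting Oh's explicit uniform pointwise bounds for $\SL_n(F)$, $n\ge3$, and then actually computing the restriction of that bound to $H$ and its integral over $H_t$; as written, the specific exponent $\tfrac{1}{n-1}$ in your key integral estimate, and hence the bound $\kappa_\Gamma(x,x_0)\le \tfrac{n}{n-1}$, is not established.
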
 

  It is also possible to give an upper bound for the Diophantine exponent for lattices in $\SL_2(F)$
  acting on the plane $F^2\setminus \set{0}$.   The bound depends on the lattice subgroup in question,
  and involves different and more elaborate considerations. In the case of $\SL_2(\Z)$ the bound obtained
  is $\kappa_\Gamma(x,x_0)\le 6$, and the same bound holds for any tempered lattice, namely any lattice $\Gamma$ for which $L^2_0(\Gamma \setminus G)$ is a tempered representation of $G$. For further details and results about Diophantine approximation by lattice orbits  in the real and complex plane we refer to \cite{GGN3}. 
 
\begin{remark}{\rm 
 \begin{itemize}
\item The only results we are aware of in the literature regarding estimates of the Diophantine exponent for lattice
  actions on homogeneous varieties are due to  Laurent--Nogueira \cite{LN1,LN2} and to Maucourant--Weiss
  \cite{MW}.  Laurent and Nogueira  established that generically $\kappa_\Gamma(x,x_0) \le
  3$ for $\SL_2(\Z)$ acting on the plane by explicitly constructing a sequence of approximants using 
a suitable continued fractions algorithm. Maucourant and Weiss have also established an (explicit, but not as sharp) upper bound for $\kappa_\Gamma(x,x_0)$ for arbitrary lattice subgroups of $\SL_2(\R)$ using effective equidistribution of horocycle flows.
\item We note that for the linear action, determining  the exact value of $\kappa_\Gamma(x,x_0)$ generically remains an open problem, for any lattice subgroup, over any field,  in any dimension. 
 \end{itemize}
} 
\end{remark}

\begin{proof}

{\it Part I : volume estimates.}
$F^n\setminus \set{0}$ is a homogeneous space of $\SL_n(F)$, and the stability $H$ group of the standard
basis vector $e_n$ is isomorphic to the group $\SL_{n-1}(F)\rsemi  F^{n-1}$. We choose a vector space norm on $\hbox{M}_n(F)$ with the property that its restriction to  $h\in H=\SL_{n-1}(F)\rsemi F^{n-1}$, written as $h=(h_1, v_1)$, is given by  
$$
\norm{(h_1, v_1)}= \max\set{\|h_1\|, \norm{v_1}},
$$
where $\|\cdot\|$ denotes the Euclidean norm when $F=\R,\C$ and the maximal norm when $F=\Q_p$.

We can now evaluate the volume growth of $H_t=\set{h\in H\,;\, \log \norm{h} < t}$.
Due to our choice of the norm,
$$
m_H(H_t)= \left(\int_{\|h_1\| \le e^{t}} dh_1\right)\cdot \left(\int_{\norm{v_1}\le e^{t} } dv_1\right)\,.
$$
The first integral can be estimated using \cite[Appendix~1]{DRS} for $F=\R,\C$ or 
\cite[Sec.~7]{GW} for general local fields. This gives
\begin{equation}\label{eq:sl}
\int_{\|h_1\| \le e^{t}} dh_1 \asymp
\left\{
\begin{tabular}{ll}
$e^{t\left((n-1)^2-(n-1)\right)}$ & \hbox{for $F=\R,\Q_p$,}\\
$e^{2t\left((n-1)^2-(n-1)\right)}$ & \hbox{for $F=\C$.}
\end{tabular}
\right.
\end{equation}
Also clearly,
$$
\int_{\|v_1\| \le e^{t}} dv_1 \asymp 
\left\{
\begin{tabular}{ll}
$e^{t (n-1)}$ & \hbox{for $F=\R,\Q_p$,}\\
$e^{2t (n-1)}$ & \hbox{for $F=\C$.}
\end{tabular}
\right.
$$
Hence,
$$
a(X)=\limsup_{t\to\infty} \frac{\log m_H(H_t)}{t} \asymp 
\left\{
\begin{tabular}{ll}
$(n-1)^2$ & \hbox{for $F=\R,\Q_p$,}\\
$2(n-1)^2$ & \hbox{for $F=\C$.}
\end{tabular}
\right.
$$
Now it follows from (\ref{eq:llow}) that for almost every $x_0\in X$,
$$
\kappa_\Gamma(x,x_0)\ge \frac{\dim(X)}{a(X)}=\frac{n}{(n-1)^2},
$$
which proves the lower bound in the proposition.\\

{\it Part II : spectral estimates.} 
Let $\pi$ denote the representation of $H$ on $L^2_0(\Gamma \setminus G)$.
We proceed to estimate the decay of the operator norm $\pi(\beta_t)$, where
$\beta_t$ are the Haar-uniform averages supported on the subsets $H_t$. 
The cases $n\ge 4$ and $n=3$ require separate arguments.

When $n\ge 4$, we use that for the group $\SL_k(F)$, any unitary representation 
without invariant vectors is $L^p$-integrable for $p>2(k-1)$, provided that $k \ge 3$ (see e.g. \cite{ht,oh}).
Hence, using the spectral transfer principle \cite{N3},  when $n\ge 4$ any unitary  representation $\sigma$ of $\SL_{n-1}(F)$ without invariant vectors has the property that $\sigma^{\otimes ( n-2) }$  is weakly contained in the regular representation  of $\SL_{n-1}(F)$. In particular, this statement holds for the representation of $\SL_{n-1}(F)$ of the form $\sigma=\pi|_{\SL_{n-1}(F)}$, for  any lattice subgroup  $\Gamma \subset \SL_n(F)$.
 Thus the restriction of the tensor power $ \pi^{\otimes (n-2)}$ to the closed subgroup $\SL_{n-1}(F)$ is
 weakly contained in the regular representation of  $\SL_{n-1}(F)$. By \cite{chh}, it follows that the
 $K$-finite matrix coefficients of the representation $\pi|_{\SL_{n-1}(F)}$ are dominated along the split
 Cartan subgroup by a scalar multiple of $\Xi_{n-1}^{1/(n-2)}$, where $\Xi_{n-1}$ denotes the Harish-Chandra function of $\SL_{n-1}(F)$. 
 
We claim that the last estimate also holds when $n=3$. To prove this, we consider the restriction of the representation $\pi$ to
$H = \SL_2(F)\rsemi F^2$. Since this representation has no $F^2$-invariant vectors, 
it follows from the Kazhdan's original argument \cite{k} that $\pi|_{\SL_{2}(F)}$ is weakly contained 
in the regular representation of $\SL_2(F)$, so that the same estimate as above applies.

Applying the general method of spectral estimates on groups with an Iwasawa decomposition (see
\cite{GGN3} for details), and using the fact that the sets $H_t$ are bi-invariant by a maximal compact
subgroup of $\SL_{n-1}(F)$, we can estimate the operator norm  by integrating the bound
$\Xi_{n-1}^{1/(n-2)}$ over  $H_t$, where we view the function as independent of the second variable
$v_1\in F^{n-1}$. This gives
\begin{align*}
\norm{\pi(\beta_t)} &\le \frac{1}{m_H(H_t)} \int_{H_t}\Xi_{n-1}^{1/(n-2)}(h_1) dh_1 dv_1\\
&=\frac{1}{m_H(H_t)}m_{F^{n-1}}(B_{e^{t}})  \int_{\|h_1\| \le
 e^{t}}\Xi_{n-1}^{1/(n-2)}(h_1)\, dh_1   
\end{align*}
Since the function $\Xi_{n-1}^{1/(n-2)} $ is $L^{2(n-2)+\eta}$-integrable for every $\eta>0$,
we can apply the following simple estimate, based on H\"older's inequality for the conjugate exponents
$\frac1p=\frac{1}{2(n-2)+\eta}$ and $\frac1q=1-\frac{1}{2(n-2)+\eta}$: 
\begin{align*}
\int_{\|h_1\| \le
 e^{t}}\Xi_{n-1}^{1/(n-2)}(h_1)\, dh_1 &\le \|\Xi_{n-1}^{1/(n-2)}\|_p \cdot \|\chi_{\{\|h_1\|\le e^t\}}\|_q\\
&\ll m_{\SL_{n-1}(F)}(\set{\|h_1\| \le e^{t}})^{1-\frac{1}{2(n-2)+\eta}}.
\end{align*}
Hence, using the above volume estimates, we deduce that 
\begin{align*}
\norm{\pi(\beta_t)} &\ll m_{\SL_{n-1}(F)}(\set{\|h_1\| \le e^{t}})^{-\frac{1}{2(n-2)+\eta}}
\ll  m_H(H_t)^{-\theta+\eta'}
\end{align*}
for every $\eta'>0$ with $\theta=\frac{1}{2(n-1)}$. Now it follows from Theorem \ref{th:ggn3} that
for almost every $x\in X$,
$$
\kappa_\Gamma(x,x_0)\le \frac{\dim(X)}{2\theta a(X)} =\frac{n}{n-1}\,,
$$
as claimed.
\end{proof}

\subsection{Simultaneous approximation on $3$-dimensional space}

We consider the standard action of $\SL_3(F)$ on the space $U=F^3\setminus \set{0}$ and the action
of $\SL_3(F)$ on the space $U\times U$ defined by $g(v,w)=(gv, \left(g^{-1}\right)^t w)$.
Given a lattice $\Gamma$ in $\SL_3(F)$,  we are interested in the problem of simultaneous 
Diophantine approximation in $U$, namely, 
we will seek to solve the inequalities 
$$
\norm{\gamma v-v^\prime}\le \epsilon,\;\; \norm{(\gamma^{-1})^t w-w^\prime} \le \epsilon
\quad\hbox{and}\quad \norm{\gamma} \le  \epsilon^{-\zeta}.
$$
We note that the action on $U\times U$ preserves the standard bilinear form $J(v,w)=\sum_{i=1}^3 v_iw_i$ and hence
each of the subvarieties 
$$
W_\alpha=\set{(v,w)\in U\times U(F)\,;\, J(v,w)=\alpha}.
$$

We establish the following  best possible result for Diophantine exponents,  uniformly for all lattices. 

\begin{Prop}\label{ex:3}
For an arbitrary lattice $\Gamma$ in $\SL_3(F)$, the exponent of Diophantine approximation for
$\Gamma$-action on $W_\alpha$ with $\alpha\ne 0$ is given by
$$
\kappa_\Gamma(x,x_0) = 5/2
$$
for almost every $x,x_0\in W_\alpha$. 
\end{Prop}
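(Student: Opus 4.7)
The strategy is to realize $W_\alpha$ for $\alpha\ne 0$ as a homogeneous space $G/H$ with $H\cong\SL_2(F)$, and then apply Corollary~\ref{cor:ggn3}. The target exponent $5/2$ will arise as $\dim(X)/a(X)$, so the lower bound follows from \eqref{eq:llow}, and the main task is to verify that the representation of $H$ on $L^2_0(\Gamma\backslash G)$ is tempered for arbitrary lattices $\Gamma\subset\SL_3(F)$.

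First I would identify the stabilizer. For $\alpha\ne 0$, a routine linear algebra argument shows that $\SL_3(F)$ acts transitively on $W_\alpha$. Taking as base point $x_0=(e_1,\alpha e_1)$, one checks directly that the stabilizer $H$ consists precisely of the block-diagonal matrices with a $1$ in the upper-left corner and an arbitrary $A\in\SL_2(F)$ in the lower-right block, so $H\cong\SL_2(F)$. Equipping $\SL_3(F)$ with a norm whose restriction to $H$ reduces to a standard matrix norm on $\SL_2(F)$, the volume estimate \eqref{eq:sl} with $k=2$ gives $m_H(H_t)\asymp e^{at}$ with $a=2$ over $F=\R,\Q_p$ and $a=4$ over $F=\C$. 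Since $\dim W_\alpha=5$ over $F=\R,\Q_p$ and $10$ over $F=\C$, in every case $\dim(X)/a(X)=5/2$, and \eqref{eq:llow} yields $\kappa_\Gamma(x,x_0)\ge 5/2$ for almost every $x_0$.

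The main obstacle is the matching upper bound, which by Corollary~\ref{cor:ggn3} reduces to proving that the representation $\pi$ of $G$ on $L^2_0(\Gamma\backslash G)$ restricts to a tempered representation of $H$. I would establish this by the Mackey--Kazhdan argument already employed for $n=3$ in the proof of Proposition~\ref{ex:4}. The subgroup $H\cong\SL_2(F)$ is the semisimple part of the Levi of the maximal parabolic $P\subset\SL_3(F)$ stabilizing the line $Fe_1$, whose unipotent radical $U_P$ is isomorphic to the additive group $F^2$. The semidirect product $H\ltimes U_P$ coincides with the stabilizer in $\SL_3(F)$ of $e_1$ under the standard linear action, and $H$ acts transitively with unipotent point stabilizers on the nontrivial characters of $U_P$. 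Since $U_P$ is non-compact, Howe--Moore forces $\pi$ to have no $U_P$-invariant vectors, and Kazhdan's Mackey-theoretic trick then shows that $\pi|_H$ is weakly contained in the regular representation of $\SL_2(F)$.

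Temperedness in hand yields the spectral parameter $\theta_\Gamma(X)=1/2$, and Corollary~\ref{cor:ggn3} applies since $G$ and $H$ are both semisimple and non-compact, giving $\kappa_\Gamma(x,x_0)\le 5/2$ for every $x_0$ and almost every $x$; combined with the lower bound this completes the proof. The fact that sharpness holds uniformly over all lattices $\Gamma$ reflects that temperedness here is forced by the structure of the embedding $H\subset G$ (via Howe--Moore and Kazhdan) rather than by any delicate input from automorphic spectra.
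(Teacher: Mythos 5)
Your proposal is correct and follows essentially the same route as the paper: realize $W_\alpha\simeq\SL_3(F)/H$ with $H\cong\SL_2(F)$ via transitivity of the action, get $\dim(X)/a(X)=5/2$ from the volume estimates (\ref{eq:sl}), and conclude via Corollary~\ref{cor:ggn3} once the restriction of $L^2_0(\Gamma\backslash G)$ to $H$ is shown to be tempered. The only difference is cosmetic: the paper compresses the temperedness step into a citation of Kazhdan's property (T) argument (as it did for $n=3$ in Proposition~\ref{ex:4}), while you spell out the same Howe--Moore plus Mackey reasoning for the mirabolic subgroup $H\ltimes U_P$, and you use the base point $(e_1,\alpha e_1)$ rather than $(e_3,\alpha e_3)$.
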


 \begin{proof}
We first observe that the action of $\SL_3(F)$ on $W_\alpha$ with $\alpha\ne 0$ is transitive.
Clearly $W_\alpha$ contains the orbit $\SL_3(F)\cdot (e_3,\alpha e_3)$ where $e_3=(0,0,1)^t$. 
This is in fact an equality, namely if $J(v,w)=\alpha$, then for some $g\in SL_3(F)$ we have $ge_3= v$ and $(g^{-1})^t \alpha e_3 =w$, or equivalently $g^t w=\alpha e_3$. Indeed the first condition amounts to the third column of $g$ being equal to $v$, and the second is solved by choosing the first two columns of $g$ to be a basis of $w^\perp$ (under the form $J$), linearly independent of $v$, and adjusting one of the basis vectors so that $g$ has determinant $1$.  

Since the action is transitive, $W_\alpha\simeq \SL_3(F)/H$ where $H$ 
is the copy of $SL_2(F)$ embedded in $\SL_3(F)$  in the upper left hand corner. 
It follows from Kazhdan's original argument \cite{k} (proving property $T$) that
the unitary representation of $H$ on $L^2_0(\Gamma \setminus \SL_{3}(F))$ is tempered.
Hence, by Corollary \ref{cor:ggn3},
$$
\kappa_\Gamma(x,x_0)=\frac{\dim(W_\alpha)}{a(W_\alpha)}=5/2
$$
for almost every $x,x_0\in W_\alpha$. Here we used that $a(W_\alpha)=2$ when $F=\R,\Q_p$
and $a(W_\alpha)=4$ when $F=\C$, which follows from the volume estimates (\ref{eq:sl}).
\end{proof}

\subsection{Inhomogeneous Diophantine approximation}

We turn to consider the action of a lattice subgroup $\Gamma$ of the affine group  $\SL_n(F)\rsemi F^n$ acting on $F^n$. 

%
\begin{Prop}
For $n \ge 3$  the exponent of Diophantine approximation for an arbitrary lattice
$\Gamma$ in $G=\SL_n(F)\rsemi F^n$ acting on $X=F^n\setminus \set{0}$ is estimated by 
\begin{equation}\label{eq:lll}
\frac{1}{n-1}\le \kappa_\Gamma(x,x_0) \le 1
\end{equation}
for almost every $x,x_0\in X$. 
\end{Prop}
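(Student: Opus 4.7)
The plan is to follow the same two-step template used in the proof of Proposition \ref{ex:4}: first extract the lower bound from the geometric pigeon-hole estimate (\ref{eq:llow}) via a volume computation, then obtain the upper bound from Theorem \ref{th:ggn3} via a spectral gap estimate for the restriction of the automorphic representation to the stabilizer.

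For the geometric side, I would first identify $X = F^n\setminus\set{0}\simeq G/H$, where $G=\SL_n(F)\rsemi F^n$ acts by affine transformations and $H$ is the stabilizer of a basepoint; writing elements of $G$ as $(A,v)$ with $A\in \SL_n(F)$, $v\in F^n$, the stabilizer of $0$ is exactly $H=\SL_n(F)$ sitting in $G$ as $\{(A,0)\}$, so any other stabilizer is conjugate to this copy. With this identification $\dim(X)=n$ (respectively $2n$ for $F=\C$), while the norm on $H=\SL_n(F)$ inherited from $M_{n+1}(F)$ gives, via the standard volume formula (\ref{eq:sl}) already used in Proposition \ref{ex:4},
$$a(X)=\limsup_{t\to\infty}\frac{\log m_H(H_t)}{t}= n(n-1)$$
for $F=\R,\Q_p$ (and $2n(n-1)$ for $F=\C$). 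Plugging into (\ref{eq:llow}) yields $\kappa_\Gamma(x,x_0)\ge \dim(X)/a(X)=1/(n-1)$ for almost every $x_0$, which is the lower bound.

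For the upper bound I would apply Theorem \ref{th:ggn3}, reducing the task to establishing the operator-norm decay $\|\pi(\beta_t)\|\ll m_H(H_t)^{-\theta+\eta'}$ for the representation $\pi$ of $H=\SL_n(F)$ on $L^2_0(\Gamma\backslash G)$, with $\theta=1/(2(n-1))$. The first thing to check is that this representation has no $H$-invariant unit vectors: right $H$-invariant functions on $\Gamma\backslash G$ descend to functions on $\Gamma\backslash(G/H)=\Gamma\backslash F^n$, and since by assumption almost every $\Gamma$-orbit is dense in $F^n$, the only square-integrable invariants are constants, which are excluded in $L^2_0$. Given this spectral gap, for $n\ge 3$ one can invoke the standard integrability result for $\SL_n(F)$ (as in Proposition \ref{ex:4}): any unitary representation of $\SL_n(F)$ without invariant vectors has its $(n-1)$-fold tensor power weakly contained in the regular representation, so by \cite{chh} the $K$-finite matrix coefficients of $\pi$ are dominated along the split Cartan by a scalar multiple of $\Xi_n^{1/(n-1)}$.

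Since $H_t$ is bi-invariant under a maximal compact subgroup of $\SL_n(F)$, I would then estimate $\|\pi(\beta_t)\|$ by integrating $\Xi_n^{1/(n-1)}$ over $H_t$ and applying H\"older's inequality with conjugate exponents $p=2(n-1)+\eta$ and its conjugate $q$, using that $\Xi_n^{1/(n-1)}\in L^{2(n-1)+\eta}(\SL_n(F))$ for every $\eta>0$. The resulting bound is $\|\pi(\beta_t)\|\ll m_H(H_t)^{-\frac{1}{2(n-1)+\eta}}$, so one may take $\theta=1/(2(n-1))$ in (\ref{eq:mean1}). Theorem \ref{th:ggn3} then gives
$$\kappa_\Gamma(x,x_0)\le \frac{\dim(X)}{2\theta\, a(X)}=\frac{n}{2\cdot \frac{1}{2(n-1)}\cdot n(n-1)}=1$$
for almost every $x\in X$, completing the bound in (\ref{eq:lll}). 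The main obstacle in this argument is the first step of the spectral analysis: one has to verify that $\pi$ has no $H$-invariant vectors despite the fact that $H=\SL_n(F)$ fails to project onto all of $G$ and despite the non-semisimple structure of $G$, and this is precisely where the hypothesis that almost every $\Gamma$-orbit on $F^n$ is dense is essential.
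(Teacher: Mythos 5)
Your proposal follows the paper's proof essentially step by step: Part I is the same volume computation (the norm chosen as in Proposition \ref{ex:4}, the estimate (\ref{eq:sl}) giving $m_H(H_t)\asymp e^{t(n^2-n)}$, hence $a(X)=n(n-1)$ for $F=\R,\Q_p$, and the lower bound $1/(n-1)$ via (\ref{eq:llow})), and Part II is the same spectral argument (weak containment of $\pi^{\otimes(n-1)}$ in the regular representation of $\SL_n(F)$ for $n\ge 3$, the matrix coefficient bound by $\Xi_n^{1/(n-1)}$ from \cite{chh}, H\"older's inequality yielding $\theta=\tfrac{1}{2(n-1)}$, and Theorem \ref{th:ggn3} giving the upper bound $1$). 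All the numerical conclusions agree with the paper.

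The one place where you go beyond what the paper writes is your justification that the representation $\pi$ of $H=\SL_n(F)$ on $L^2_0(\Gamma\backslash G)$ has no invariant vectors, and that particular inference is not valid as stated. An $H$-invariant function in $L^2(\Gamma\backslash G)$ only gives a \emph{measurable} $\Gamma$-invariant function on $F^n$, and density of almost every $\Gamma$-orbit does not force such a function to be constant: dense (even minimal) actions need not be ergodic for a given invariant measure class, so "a.e.\ orbit dense $\Rightarrow$ only constant $L^2$-invariants" is a false implication in general. The absence of $H$-invariant vectors is true here, but it has to come from the structure of the situation rather than from topological density --- for instance from the structure of lattices in $\SL_n(F)\rsemi F^n$ (the intersection with the translation part and the projection to $\SL_n(F)$ are lattices, after which one argues via Mautner/Howe--Moore-type reasoning or Fourier analysis on the resulting torus), or from the general framework of \cite{GGN3} to which the paper defers; the paper itself leaves this point implicit by simply invoking the argument of Proposition \ref{ex:4}. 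Apart from this faulty justification of a true auxiliary fact, your proof is correct and identical in structure to the published one.
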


We also remark that when  $n=2$ and the lattice satisfies a suitable spectral condition, the best
possible exponent  $\kappa_\Gamma(x,x_0)=\frac{1}{n-1}$ is achieved generically. 
This applies, for example, to the lattice $SL_2(\Z)\rsemi \Z^2$ acting on $\R^2$, and the lattice $SL_2(\Z[i])\rsemi \Z[i]^2$ acting on $\C^2$. For full details on this matter  we refer to \cite{GGN3}.

\begin{proof} 
We note that $X\simeq G/H$ where $H=\SL_n(F)$ is the stabiliser of the origin.
We introduce the norm on $G$ as in the proof of Proposition \ref{ex:4}. Then it follows from (\ref{eq:sl})
that 
\begin{equation}\label{eq:ax}
a(X)=\limsup_{t\to\infty} \frac{\log m_H(H_t)}{t} \asymp 
\left\{
\begin{tabular}{ll}
$n^2-n$ & \hbox{for $F=\R,\Q_p$,}\\
$2(n^2-n)$ & \hbox{for $F=\C$.}
\end{tabular}
\right.
\end{equation}
Now the lower bound in (\ref{eq:lll}) follows from (\ref{eq:llow}).
To establish an upper bound, we need to estimate decay of the operator norm of $\pi(\beta_t)$
where $\pi$ denotes the unitary representation of $H$ on $L^2_0(\Gamma\setminus G)$,
and $\beta_t$ is the Haar uniform average supported on the set $H_t$.
As in the proof of Proposition \ref{ex:4}, the representation $\pi^{\otimes (n-1)}$ is weakly contained
in the regular representation, and using
bi-invariance of $\beta_t$ under the maximal compact subgroup, the estimate of general matrix
coefficients by a power of the Harish-Chandra function from \cite{chh}, and H\"older's inequality,
we deduce that
\begin{align}\label{eq:b_gen}
\norm{\pi(\beta_t)} \le \frac{1}{m_H(H_t)} \int_{H_t}\Xi_{n}^{1/(n-1)}(h) dm_H(h)
\ll m_H(H_t)^{-\frac{1}{2(n-1)+\eta}}.
\end{align}
Hence, the upper bound in  (\ref{eq:lll}) now follows from Theorem \ref{th:ggn3}.
\end{proof}

\subsection{The variety of matrices with a fixed determinant}
Consider the variety of matrices with a determinant  $k\neq 0$:
$$
X=\set{x\in \hbox{M}_n(F)\,;\, \det (x) =k}\,.
$$
The group $G=\SL_n(F)\times \SL_n(F)$ acts transitively on $X$, via $(g_1,g_2)x=g_1xg_2^{-1}$. 
We introduce a norm on $G$ by embedding it diagonally in $\SL_{2n}(F)$.
Given a lattice $\Gamma$ in $G$, we are interested in estimating the exponents of Diophnatine
approximation for $\Gamma$-orbits in $X$, namely, to investigate existence of solutions
$(\gamma_1,\gamma_2)\in\Gamma$
of the inequalities
$$
\norm{\gamma_1 x\gamma_2^{-1}-x_0}\le \epsilon
\quad\hbox{and}\quad \norm{(\gamma_1,\gamma_2)} \le  \epsilon^{-\kappa}.
$$
In this setting we establish the following bounds:

\begin{Prop} 
For $n \ge 3$  the exponent of Diophantine approximation for an arbitrary irreducible lattice
$\Gamma$ in $G=\SL_n(F)\times \SL_n(F)$ acting on $X$ is estimated by 
$$
 \frac{n+1}{n}\le \kappa_\Gamma(x,x_0) \le \frac{n^2-1}{2n}
$$
for almost every $x,x_0\in X$. 

When $n=3$ the lower and upper bounds match, so that the optimal exponent is given by $4/3$. 
\end{Prop}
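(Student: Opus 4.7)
The plan is to mimic the strategy of Proposition~\ref{ex:4}: realise $X$ as a homogeneous space $G/H$, compute the volume-growth exponent $a(X)$ and the dimension of $X$, estimate the decay of the operator norm of $\pi(\beta_t)$ on $L^2_0(\Gamma\backslash G)$, and then combine Theorem~\ref{th:ggn3} with the lower bound~(\ref{eq:llow}) to pinch $\kappa_\Gamma$ between the two.

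First I would verify that $G$ acts transitively on $X$: given any $x_0\in X$ and $y\in X$, the element $g_1=y x_0^{-1}$ lies in $\SL_n(F)$ and $(g_1,I)\cdot x_0=y$. Taking a convenient base point (for example $x_0=\lambda I$ with $\lambda^n=k$ when such a $\lambda$ exists, and otherwise after a harmless conjugation), the stabiliser is the diagonal copy $H=\{(g,g):g\in\SL_n(F)\}\simeq \SL_n(F)$. Since $H$ sits inside the diagonal of $\SL_{2n}(F)$ and $\|(g,g)\|\asymp\|g\|$, the volume estimate~(\ref{eq:sl}) applied to $\SL_n(F)$ gives
$$
a(X)=\limsup_{t\to\infty}\frac{\log m_H(H_t)}{t}=n(n-1)
$$
over $F=\R,\Q_p$ (doubled over $\C$), while $\dim X=n^2-1$ (likewise doubled over $\C$). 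Inserting into~(\ref{eq:llow}) yields the lower bound $\kappa_\Gamma(x,x_0)\ge (n^2-1)/(n(n-1))=(n+1)/n$ immediately.

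The delicate step is the spectral estimate. Irreducibility of $\Gamma$ forces the projection of $\Gamma$ to each factor of $G$ to be dense, so $L^2_0(\Gamma\backslash G)$ carries no vectors invariant under either $\SL_n(F)\times\{e\}$ or $\{e\}\times \SL_n(F)$. Disintegrating the unitary $G$-representation $\pi$ on $L^2_0(\Gamma\backslash G)$ into external tensor products $\pi_1\otimes \pi_2$ of irreducibles of the two factors, both $\pi_i$ are therefore non-trivial. By the $L^p$-integrability result for $\SL_n(F)$ with $n\ge 3$ from~\cite{ht,oh} and the Cowling--Haagerup--Howe pointwise bound~\cite{chh}, the $K$-finite matrix coefficients of each $\pi_i$ are dominated along the split Cartan by $C\,\Xi_n^{1/(n-1)}$. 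On the diagonal $H$, the matrix coefficients of $\pi_1\otimes\pi_2$ factorise as products of those of the factors, hence are dominated by $\Xi_n^{2/(n-1)}$ along the diagonal Cartan.

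Since $\Xi_n^{2/(n-1)}\in L^{(n-1)+\eta}$ on $\SL_n(F)$ for every $\eta>0$, Hölder's inequality combined with the bi-invariance of $\beta_t$ under a maximal compact subgroup of the diagonal gives
$$
\|\pi(\beta_t)\|\le \frac{1}{m_H(H_t)}\int_{H_t}\Xi_n^{2/(n-1)}(h)\,dm_H(h)\ll m_H(H_t)^{-1/(n-1)+\eta},
$$
so the spectral parameter is $\theta_\Gamma(X)=1/(n-1)$. Theorem~\ref{th:ggn3} then delivers $\kappa_\Gamma(x,x_0)\le \dim(X)/(2\theta_\Gamma(X)\,a(X))=(n^2-1)/(2n)$, and the two bounds collapse to $4/3$ when $n=3$. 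The principal obstacle is the tensor-product reduction in the spectral step: invoking only the absence of $H$-invariant vectors in $\pi|_H$ would yield the weaker decay $\Xi_n^{1/(n-1)}$, a spectral parameter $1/(2(n-1))$ and the inferior bound $(n^2-1)/n$, missing the sharp value at $n=3$. It is exactly the tensor structure, which rests on irreducibility of $\Gamma$, that supplies the extra factor of two in $\theta_\Gamma(X)$ and makes the two bounds meet.
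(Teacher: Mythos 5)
Your proposal is correct and follows essentially the same route as the paper: realise $X\simeq G/H$ with $H$ the (conjugate of the) diagonal copy of $\SL_n(F)$, get $a(X)=n(n-1)$ and $\dim X=n^2-1$ (doubled over $\C$) for the lower bound $(n+1)/n$, and then use irreducibility of $\Gamma$ so that the automorphic spectrum decomposes into tensor products $\pi_1\otimes\pi_2$ of nontrivial representations of the factors, whose matrix coefficients restricted to the diagonal lie in $L^{n-1+\eta}$, giving $\theta=1/(n-1)$ and the upper bound $(n^2-1)/(2n)$ via Theorem \ref{th:ggn3}. The only difference is that you spell out the factorisation of the tensor-product matrix coefficients on the diagonal, a detail the paper delegates to \cite{GGN3}.
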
 

We note that when $n=2$  it is also possible to obtain estimate of the Diophantine exponent, but these depend on the irreducible lattice chosen. We refer to \cite{GGN3} for these results.  

\begin{proof}
We observe that $X\simeq G/H$ where $H=\{(h,h);\, h\in \SL_n(F)\}\simeq \SL_n(F)$ is the stabiliser of the identity
matrix. The growth rate of $m_H(H_t)$ can be estimated as in the previous sections (cf. (\ref{eq:ax})),
and $\dim(X)=n^2-1$ when $F=\R,\Q_p$ and $\dim(X)=2(n^2-1)$ when $F=\C$. The lower bound
follows from (\ref{eq:llow}). To prove the upper bound, we need to estimate $\|\pi(\beta_t)\|$
where $\pi$ denotes the representation of $H$ on $L^2_0(\Gamma\setminus G)$.
The bound (\ref{eq:b_gen}) is valid for all unitary representations of $\SL_n(F)$ 
without invariant vectors. Hence, it applies in our case as well, and it follows from Theorem
\ref{th:ggn3} that with $\theta=\frac{1}{2(n-1)}$,
\begin{equation}\label{eq:lll}
\kappa_\Gamma(x,x_0)\le \frac{\dim(X)}{2\theta a(X)}=\frac{n^2-1}{n}
\end{equation}
for almost every $x\in X$.

However, in the present situation it is possible to give a better estimate. Indeed, since the lattice is
irreducible, the spectral decomposition of the representation of $G=\SL_n(F)\times \SL_n(F)$ in
$L^2_0(\Gamma \setminus G)$ involves tensor products $\pi_1\otimes \pi_2$ of irreducible infinite
dimensional representations of $\SL_n(F)$. It follows that the restriction of the associated $K$-finite
matrix coefficients to the diagonally embedded  group $H\simeq \SL_n(F)$ is in fact not just in
$L^{2(n-1)+\eta}(H)$ but in $L^{n-1+\eta}(H)$ for every $\eta>0$. 
For a detailed account of this argument we refer to \cite{GGN3}. 
It follows that the spectral estimate obtained is 
$$\norm{\pi(\beta_t)}\ll m_H(H_t)^{-\frac{1}{n-1+\eta}}.$$
Hence, (\ref{eq:lll}) holds with $\theta =\frac{1}{n-1}$, and this implies the upper bound in the proposition.
\end{proof}

\ignore{

\subsection{Diophantine approximation on rational hyperboloids} 
Consider the rational hyperboloid $V_{Q,b}\subset \R^{n+1}$ given by $a_1x_1^2+a_2x_2^2+\cdots + a_nx_n^2=b$, with $a_i, b \in \N_+$,  $gcd(a_1,\dots,a_n)=1$, and where  $Q$ denotes the associated quadratic form. 
Let $G=G_Q$ denote the isometry group of the form, so that $G_Q=O(n,1)$, and let $\Gamma_Q=G_Q(\Z)$ denote the lattice subgroup of integer points, namely $\Gamma_Q =G_Q\cap GL_{n+1}(\Z)$. We consider the norm $\tr g^t g$ on $G$, and restrict a Euclidean norm on $\R^{n+1}$ to $V_{Q,b}$. We now state a general upper bound on the Diophantine exponent of the lattice $G(\Z)$ acting on $V_{Q,b}$, which applies to every rational hyperboloid. A more detailed  discussion, which distinguishes between different hyperboloids and establishes better bounds in some cases, can be found in \cite{GGN3}. 
\begin{Prop} The Diophantine exponent of the lattice $\Gamma$ of integer points in $G=G_Q$ satisfies $\kappa \le $. The same bound for the exponent holds for every congruence subgroup of the lattice $\Gamma$. 
\end{Prop} 
\begin{proof}
The hyperboloid $V_{Q,b}$ is a homogeneous space under $G_Q$, and we let  $H$ denote the stability group of the  point $ (\sqrt{b/a_1},0,0,0)$, so that $H$ is isomorphic to $O(n-1,1)$. The $\R$-split Cartan subalgebra $\mathfrak{a}$ of $G$ is one dimensional, and we parametrize its real dual $\mathfrak{a}^\ast$ by assigning the functional $\rho_n$ (half the sum of the positive roots) to $\frac12(n-1)$.  This amounts to choosing an element $X_0\in \mathfrak{a}$ 
with $\alpha(X_0)=1$ where $\alpha$ is the root, see \cite[p. 194]{GV}. 
  The spherical unitary dual of $G=G_Q$ is then parametrized by $i \R_{\ge 0}\cup (0,\frac12(n-1))$. The quantitative form of property $\tau$ established by Burger-Li-Sarnak \cite{BS} and Clozel \cite{clozel} implies that the parameter of a (not necessarily spherical)  non-tempered complementary series representation that can occur weakly in the representation $L^2_0(\Gamma \setminus G)$ has parameter at most $\rho-\frac14$ 
   It follows that the decay of the corresponding $K$-finite matrix coefficient  along $A_+$ has rate at least $C e^{-t/4}$. Viewing $A$ as a subgroup of $H$, and recalling that the volume density on $A\subset H$ in polar coordinates on $H$  is given by $(\sinh t)^{2\rho_{n-1}} \sim e^{(n-2)t}$, we conclude that the restriction of  these $K$-finite matrix coefficients to $H$ is in $L^{4(n-2)+\eta}$. Hence, using H\"older's inequality as in our previous arguments above, we conclude that the averages $\beta_t$ on $H$ satisfy the norm bound 
$$\norm{\pi_{\Gamma \setminus G}^0(\beta_t)}\le  C_n^\prime m_H(H_t)^{-\frac{1}{4(n-2)} +\eta}$$ 
so that the spectral parameter is $\theta = 1/4(n-2)$.

As to volume growth,  the restriction of the norm $\tr g^t g$ to $H$ embedded as a subgroup of $G\subset GL_{n+1}(\R)$ implies that the volume growth parameter is $a=n-1$ (see Burger-Li-Sarnak, p. 9). Therefore, the lower bound 
on the Diophantine exponent is $\kappa \ge \frac{d}{a}=\frac{n}{n-1}$.  The upper bound obtained from the foregoing considerations is $\kappa \le \frac{d}{2\theta a}=\frac{4n(n-2)}{n-1}$.

Finally, the bounds towards property $\tau$ by their definition apply uniformly over all congruence subgroups, so the last statement follows.

\end{proof}

}

\end{document}